\newcommand{\comments}[1]{}
\newcommand{\R}{\mathbb{R}}
\newcommand{\N}{\mathbb{N}}
\theoremstyle{plain}
\theoremstyle{plain}
\newtheorem{theorem}{Theorem}[section]
\newtheorem{lemma}{Lemma}[section]
\newtheorem{proposition}[lemma]{Proposition}
\theoremstyle{definition}
\newtheorem{remark}[lemma]{Remark}
\numberwithin{equation}{section}
\title[Error Estimates for Deep Learning]{Error Estimates for Deep 
Learning Methods in Fluid Dynamics}
\author[A. Biswas]{Animikh Biswas}
\address[Animikh Biswas]
{\newline Department of Mathematics and Statistics,
\newline University of Maryland Baltimore County,
\newline 1000 Hilltop Circle,
\newline  Baltimore, MD 21250, USA}
\email[]{abiswas@umbc.edu}
\urladdr{https://userpages.umbc.edu/~abiswas/}
\author[J. Tian]{Jing Tian}
\address[Jing Tian (corresponding author)]
{\newline Department of Mathematics,
	\newline Towson University,
	\newline 7800 York Road,
	\newline  Towson, MD 21252, USA}
\email[]{jtian@towson.edu}
\author[S. Ulusoy]{Suleyman Ulusoy}
\address[Suleyman Ulusoy]
{\newline Department of Mathematics and Natural Sciences,
\newline American University of Ras Al Khaimah,
\newline P.O. Box 10021,
\newline Ras Al Khaimah, UAE}
\email[]{suleyman.ulusoy@aurak.ac.ae}
\urladdr{https://aurak.ac.ae/en/dr-suleyman-ulusoy/}
\date{\today}
\keywords{partial differential equations, machine learning, deep learning, neural network,  convergence, error estimate}
\begin{document}
	
	\begin{abstract}
		In this study, we provide  error estimates and stability  analysis of deep learning techniques for certain partial differential equations including the incompressible Navier-Stokes equations. In particular, we obtain explicit error estimates (in suitable norms) for the solution computed by optimizing a loss function in a Deep Neural Network (DNN) approximation of the solution, with a fixed complexity.
	\end{abstract}
	
	\maketitle
	
	
	\section{Introduction}
	Machine Learning, which has been at the forefront of the data science and artificial intelligence revolution in the last twenty years, has a wide range of applications in natural language processing, computer vision, speech and image recognition, among others \cite{Good, 37, 35}. Recently, its use has proliferated
	in computational sciences and physical modeling such as the modeling  of turbulence \cite{FSPS, TWPH, TDA, jimenez, WWX, WXP}.
	Moreover, machine learning methods (which are {\it mesh-free}) have gained wide applicability in obtaining numerical solutions  of various types of partial differential equations (PDEs); see \cite{Chrimachinelear, Bernermachinelear, GRK, RWTK, RPK, RK, SS, RWTK} and the references therein. The need for these studies stems from the fact that when using traditional numerical methods in a high-dimensional PDE, the methods sometimes become infeasible. High-dimensional PDEs appear in a number of models  for instance in the financial industry,  in a variety of contexts such as in derivative pricing models, credit valuation adjustment models, or portfolio optimization models. Such high-dimensional fully nonlinear PDEs are exceedingly difficult to solve as the computational effort for standard approximation methods grows exponentially with the dimension.
	For example, in finite difference methods, as the dimension of the PDEs increases, the number of grids increases considerably and there is a need for reduced time step-size. This increases the computational cost and memory demands. Under these circumstances, implementing the deep learning algorithms can be helpful. In particular, the neural networks approach in partial differential equations (PDEs) offer implicit regularization and can overcome the curse of high dimensions \cite{Chrimachinelear, Bernermachinelear}.

	Many infinite dimensional dynamical systems of practical interest arise in the context of geophysical flows related to the atmosphere and ocean. The Navier-Stokes and Euler equations, either alone or coupled with governing equations of other physical quantities such as the temperature and/or the magnetic field, are the fundamental equations governing the motion of fluids. They appear in the study of diverse physical phenomena such as aerodynamics, geophysics, atmospheric physics, meteorology and plasma physics. Especially, the Navier-Stokes equations can be used to model the Incompressible fluid flow and have been employed in describing many phenomena in science and engineering applications. For example, they are used in modeling the water flow in a pipe, air flow around a wing, ocean currents and weather. They are employed in the design of cars, aircrafts, and power stations, in the study of blood flow and many other applications. In this study, we mainly focus on using the neural networks techniques to solve two dimensional Navier-Stokes equations. However, we consider the elliptic case first to illustrate the fundamental issues involved.
	
 In literature, most studies \cite{GRK, RWTK, RPK, RK} focus on the numerical efficacy study of designing the neural network algorithms. 
 Concrete and complete mathematical analysis are meager for such methods applied to PDEs, in particular, for the Navier-Stokes equations, although some results on  convergence (as the complexity of the neural network tends to infinity) in the weak topology for some semilinear PDEs  can be found in \cite{SS}. The goal of this paper is to provide a mathematically rigorous error analysis of deep learning methods employed in \cite{GRK, RWTK, RPK, RK}
 for the general elliptic and two-dimensional Navier-Stokes equations. Our goal in this paper is not to analyze all the details of different possibilities of neural network architecture. Instead, we would like to provide a mathematically rigorous analysis  of the method, with {\it error estimates, and stability analysis}, similar in spirit to the probabilistic error analysis for machine learning algorithms for the Black-Scholes equations in \cite{Bernermachinelear}. We consider two different settings: the elliptic PDEs, mainly to fix ideas and illustrate our approach, and the  Navier-Stokes equations, which is the main focus of this work.
 Although our results are proven in the context of the two-dimensional Navier-Stokes equations, we note that our analysis applies equally well to the three dimensional case, up to the interval of existence of a {\it strong solution}, which in the two dimensional case, exists globally in time.  
 
 The computational algorithm  employed in machine learning of PDEs (for instance in \cite{GRK, RWTK, RPK, RK}) involves representing the approximate solution by a {\it Deep Neural Network (DNN)}, in  lieu of a spectral or finite element approximation, 
 and then minimizing, over all such representations, an appropriate  loss function, measuring the deviation of this representation from the PDE and the initial and boundary conditions. One important thing to note in this approach is the following. It is well-known that optimization of loss functions in a deep neural network is a non-convex optimization problem. Therefore, neither the existence nor the uniqueness of a global optimum is guaranteed. Nevertheless,  we side step this issue by obtaining  an explicit error estimate in terms of the attained value of the loss function (which takes the value zero for the true solution). The estimate we obtain in turn guarantees that the approximate solution  thus constructed converges, in the strong topology, to the true solution as the complexity of the networks tends to infinity.

 The rest of the paper is organized as follows. Section \ref{Sec:Preliminaries} provides the preliminaries for both Neural Network settings and approximation properties which will be used in this study. Section \ref{Sec:Main results} is devoted to the statement of our main results. In section \ref{Sec:elliptic}, we present the mathematical analysis of the neural network algorithm in the elliptic system. This also serves as a systematic introduction of our analysis. In section \ref{section:navier-stokes}, we present our main results in two dimensional Navier-Stokes equations. By using Hodge decomposition, we have shown that the approximate solution using the neural network algorithm is close to the actual solution of the two dimensional Navier-Stokes equations under certain conditions. Moreover, we have proved that our scheme is approximately stable. The existence of the approximate solution is shown by applying approximation properties of neural networks.
\section{Preliminaries}\label{Sec:Preliminaries}
	\subsection{Neural Networks}
In a DNN, we consider a mapping $f:{x}\mapsto  {y}$, where ${x}$ is the input variable and $ {y}$ is the output variable. The mapping function $f$ is obtained by (function) composition of {\it layer functions}, comprising  of an input layer, an output layer and multiple hidden layers, connected in  {\it neural network}. The details are as follows.

	In a DNN, each layer is a  function of the form
	$\sigma({w} {x}+b),\  {x} \in \R^d,  {w} = (w_1, \cdots, w_d),\ b \in \R.$ Here, $\sigma$ is called the \emph{activation function} and is usually taken to be either a sigmoid ($\sigma(x)=\frac{e^x}{e^x+1}$) , $\tanh$ or $\Re \ln$, where $\Re \ln (\xi) := \max(0, \xi)$. In applications to PDE, where we require adequate regularity of solutions, a popular choice is the $\tanh$ function where $\displaystyle \tanh(\xi) = \frac{e^{\xi}-e^{-\xi}}{e^{\xi}+e^{-\xi}}$.
	
	
	Consider the collection of functions of the form
	\begin{equation}\label{i1}
		\sum \alpha_j f_1 \circ f_2 \circ f_3 \circ \cdots \circ f_{l_j}(x),
	\end{equation}
	where $f_i$ is a function of the form $\sigma(wx+b)$ described above.
	 In \eqref{i1}, $\max{l_j}$ is called the depth of the network.
	 Henceforth, we will denote by $\mathscr{F}_N$ the class of functions in \eqref{i1}, where $N$ represents the network complexity (e.g. $N$ could be the sum of the ranks of the weight matrices $w$ and the number of layers in the DNN).

	For the sake of completeness, we give a schematic representation of a neural network. Here, we adapt the standard dense neural networks which can be expressed as a series of compositions:
	\begin{equation}\label{i2}
		\begin{split}
			y_2(x) &= \sigma(W_1x+b_1),\\
			y_3(y_2) &= \sigma(W_2y_2+b_2),\\
			&\cdot\\
			&\cdot\\
			&\cdot\\
			y_{n_l}(y_{n_l-1}) &= \sigma(W_{n_l-1}y_{n_l-1}+b_{n_l-1}),\\
			y_{n_l+1}(y_{n_l}) &=  \sigma(W_{n_l}y_{n_l}+ b_{n_l}),\\
			f_{\theta} &= y_{n_l+1}(y_{n_l}(\cdots(y_2(x)))),
		\end{split}
	\end{equation}
	where $\theta$ ensembles all the weights and  parameters.
	\begin{equation}\label{i3}
		\theta = \left\{ W_1, W_2, \cdots, W_{n_l}, b_1, \cdots, b_{n_l} \right\}.
	\end{equation}
	In practice, different neural network architectures are possible such as those involving  recurrent cells \cite{38}, convolutional layers \cite{35}, sparse convolutional neural networks \cite{36}, pooling layers, residual connections \cite{37}.

	In this study, we assume that our neural networks are equipped with uniformly bounded weights and the final bias term $b_{n_l}$. We do not need any boundedness assumption on the other bias terms $b_i$. 
\subsection{Function Approximation} 
Approximation properties of different DNNs has been studied extensively since the work of  Cybenko \cite{cgapprox} and Hornik \cite{hornik}; see
\cite{TC, xie2011errors} and the references therein for more recent work.
An important question in the approximation process is how many neural network layers are needed to guarantee the approxmation accuracy? In \cite{andrewapprox}, the author showed that by using the sigmoidal activation funciton, at most $O(\varepsilon^{-2})$ neurons are needed to achieve the order of approximation $\varepsilon$. In \cite{cgapprox}, Cybenko proved that continuous functions can be approximated with arbitrary precision by the DNNs with one internal layer and an arbitrary continuous sigmoidal function providing that no constraints are placed on the number of nodes or the size of the weights. Also, in \cite{hkappro}, Hornik {\it et. al.} provided the conditions ensuring that DNNs with a single hidden layer
and an appropriately smooth hidden layer activation function are capable of arbitrarily accurate approximation to an arbitrary function and its derivatives. A relevant theorem from \cite{xie2011errors}, relating the accuracy of the approximation of a DNN with  the complexity of the DNN and the regularity of the function being approximated, is given below.
	\begin{theorem}  \label{theorappr} Suppose that $\sigma \in C^{\infty} (\R)$, $\sigma^{(v)}(0)\neq 0$ for $\nu=0, 1, ...,$ and $K\subset \R^{d}$ is any compact set. If $f\in C^{k}(K)$, then a function $\phi_n$ represented by a DNN with complexity $n \in \N$
	exists such that
		\begin{align*}
			 \|D^{\alpha}f-D^{\alpha}\phi_n\|_{C(K)}=O\left(\frac{1}{n^{(k-|\alpha|)/d}}\omega\left(D^{\beta}f, \frac{1}{n^{1/d}}\right)\right)
		\end{align*}
		holds for all multi-indexes $\alpha$, $\beta$ with $|\alpha|\leq k$, $|\beta|=k$, where 
		\[
		\omega(g,\delta)
		= \sup_{x,y \in K, |x-y| \le \delta} |g(x)-g(y)|.
		\]
	\end{theorem}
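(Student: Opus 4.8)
The plan is to decompose the approximation into two independent steps: first approximate $f$ by a polynomial that simultaneously matches all the required derivatives, and then reproduce that polynomial (together with its derivatives up to order $k$) by a DNN. The hypothesis $\sigma^{(\nu)}(0)\neq 0$ is exactly what makes the second step possible, so I would isolate it as a separate lemma and treat the polynomial step by classical approximation theory.

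First I would establish the key lemma: every multivariate polynomial, and each of its partial derivatives, can be uniformly approximated on $K$ by functions in $\mathscr{F}_n$. The mechanism is to extract monomials from $\sigma$ by differentiation in a scaling parameter. Fixing a direction $w$ and setting $g(t)=\sigma(t\,(w\cdot x))$, one computes $g^{(m)}(0)=\sigma^{(m)}(0)\,(w\cdot x)^m$, and since $\sigma^{(m)}(0)\neq 0$ this recovers $(w\cdot x)^m$ up to a nonzero constant. Replacing the $t$-derivative by a divided difference of the single-neuron maps $x\mapsto\sigma(t_j\,(w\cdot x))$ yields a genuine network function approximating $(w\cdot x)^m$; varying $w$ over finitely many directions and taking linear combinations then produces every degree-$m$ monomial, hence every polynomial of degree $m$. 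Crucially, because the spatial derivatives $D^\alpha$ commute with the $t$-divided difference, the same construction controls $D^\alpha$ of the approximant, which is what the statement demands.

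Next I would invoke a Jackson-type simultaneous approximation theorem: for $f\in C^k(K)$ there is a polynomial $P_m$ of degree $m$ with $\|D^\alpha f - D^\alpha P_m\|_{C(K)}=\order{m^{-(k-|\alpha|)}\,\omega(D^\beta f, m^{-1})}$ for all $|\alpha|\le k$, $|\beta|=k$. The bookkeeping that converts polynomial degree into network complexity is where the exponent $1/d$ enters: a polynomial of degree $m$ in $d$ variables has $\binom{m+d}{d}\sim m^d$ monomials, so reproducing $P_m$ via the lemma costs complexity $n\sim m^d$, i.e. $m\sim n^{1/d}$. Substituting gives $m^{-(k-|\alpha|)}=n^{-(k-|\alpha|)/d}$ and $\omega(D^\beta f, m^{-1})=\omega(D^\beta f, n^{-1/d})$, matching the claimed rate. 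Finally I would fix the accuracy of the lemma's polynomial-reproduction step to be of smaller order than the Jackson error, so that the triangle inequality in $C(K)$, applied separately to each $D^\alpha$, preserves the estimate.

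The main obstacle is the uniform-in-$\alpha$ derivative control inside the key lemma. Approximating the $m$-th $t$-derivative by a divided difference introduces a truncation error whose size depends on bounds for $\sigma^{(m+1)}$ over the relevant range of arguments, and these bounds must be shown not to degrade when the approximant is differentiated in $x$ up to order $k$; keeping the step sizes $t_j$ coupled to the target accuracy while simultaneously controlling every $D^\alpha$ is the delicate point. Once that lemma is in place, the Jackson estimate and the complexity count are standard, and the rate follows by combining the two steps.
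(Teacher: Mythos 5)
Note that the paper does not prove this theorem at all: it is quoted verbatim from Xie and Cao \cite{xie2011errors} as a known approximation result, so there is no in-paper proof to compare against. Your two-step outline --- a Jackson-type simultaneous polynomial approximation combined with reproduction of monomials via divided differences of $t\mapsto\sigma(t\,(w\cdot x))$ using $\sigma^{(\nu)}(0)\neq 0$, and the complexity count $n\sim m^{d}$ giving the exponent $1/d$ --- is essentially the standard argument underlying the cited result, so your proposal is consistent with the source the paper relies on rather than a divergence from it.
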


	\section{Main results}\label{Sec:Main results}
	Let $\mathscr{F}_N$ be a DNN with complexity $N$, which is a finite dimensional function space on a bounded domain. Below is a list of our main results.
	\subsection{Elliptic case}
		Consider a bounded domain $\Omega$ of $\R^2$ and the following partial differential equation
		\begin{equation} \label{a1}
		\left\{
		\begin{split}
		\mathscr{L}u &= f, \\
		u|_{\partial \Omega } &= g,
		\end{split}
		\right.
		\end{equation}
		where  $\mathscr{L}: H^2(\Omega) \to L^2(\Omega)$ is a second order uniformly  bounded elliptic operator. 
In this study, for simplicity, we consider only the case $g=0$, although the general case is similar.

Recall that \eqref{a1} is well-posed and a unique  solution exists
satisfying
	$$M:=\|u\|_{H^2(\Omega)}\leq c\|f\|_{L^2(\Omega)}.$$
	Consequently, the minimization problem
	\begin{equation}\label{an3}
	\inf_{u \in\ \text{appropriate Sobolev class}}  \left\{ \left \|(\mathscr{L}u)(x) - f(x)\right \|_{L^2(\Omega)}^2 + \|u|_{\partial \Omega}\|_{L^2(\partial \Omega)}^2   \right\}
	\end{equation}
	has a unique solution, with the value of the infimum being 0, and the infimum is attained at the solution $u$ of \eqref{a1}. 
	More generally, the same conclusion holds if we consider a loss function of the type
	\[
		L=\alpha^2 \|\mathscr{L}u-f\|_{L^2(\Omega)}^2 + 
		\beta^2\| u|_{\partial \Omega}\|_{L^{2}(\partial \Omega)}^2.
	\]
	
Thus, in order to approximate $u$ using a DNN, one considers the loss function	
	\begin{align}
		\label{j13}
		L=\alpha^2 \|\mathscr{L}u_N-f\|_{L^2(\Omega)}^2 + \beta^2\| u_N|_{\partial \Omega}\|_{L^{2}(\partial \Omega)}^2, 
		 u_N \in \mathscr{F}_N,
	\end{align}
	under the restriction that $\|u_N\|_{H^{2}(\Omega)}\leq \widetilde{M}$    (i.e. $\|u_N\|_{H^{2}(\Omega)}$ is bounded) for suitable $\widetilde{M}$ (e.g. $\widetilde{M}=2M$),
	with $\alpha, \beta>0$. Since the chosen activation function $\sigma=\tanh$ is smooth, in practice, this is achieved by restricting the (finite dimensional) parameter set in the neural network to a compact subset. 
	
	In the neural network framework, the optimization is usually conducted in a discrete setting as follows \cite{RPK}. More precisely, let $\mathscr{F}_N$ be a finite dimensional function space on a bounded domain $\Omega$. Choose a collocation points $\{x_j\}_{j=1}^m \subset \Omega$ and $\{y_j\}_{j=1}^n \subset \partial \Omega.$ Find
	\begin{equation}\label{a2}
	\inf_{u \in \mathscr{F}_N, \|u_N\|_{H^2(\Omega)} \le \widetilde{M}}  \left\{ \sum_{j=1}^m \alpha^2\left|(\mathscr{L}u)(x_j) - f(x_j)\right|^2 + \sum_{j=1}^n \beta^2\left|u(y_j)-g(y_j)\right|^2  \right \}.
	\end{equation}
	Note that \eqref{a2} may be regarded as a Monte Carlo approximation of the corresponding Lebesgue integrals. Consequently,
	for mathematical convenience, let us consider the following  optimization problem, namely, find
	\begin{equation}\label{a3}
	\inf_{u \in \mathscr{F}_N, \|u_N\|_{H^2(\Omega)} \le \widetilde{M}}  \left\{ \|(\mathscr{L}u)(x) - f(x)\|_{L^2(\Omega)}^2 + \|u|_{\partial \Omega}-g\|_{L^2(\partial \Omega)}^2   \right\}.
	\end{equation}
	The infimum can be attained provided that we restrict the parameters in $\mathscr{F}_N$ in a compact set. However in this case, the infimum may not be unique.
	\begin{remark} We can also use an unrestricted optimization
	in \eqref{a2} or \eqref{a3}. However, in this case, the condition on $\| u_N\|_{H^{2}(\Omega)} \le \widetilde{M}$
	can be replaced by suitably adding a penalty/regularization term in the loss function.   This converts the restricted minimization problem to an unrestricted one and is illustrated in the Navier-Stokes case. This drawback is due to the fact in contrast to spectral or finite element methods,  the boundary conditions are not encoded in a DNN, but rather are enforced ``approximately".
	\end{remark}
	
	In all the boundary integrals above, the quantity $u|_{\partial \Omega}$ is interpreted as trace in case $u \in H^1(\Omega)$. However, since $u_N$ is smooth, its trace coincides with its restriction on the boundary. Recall that the
	trace operator is defined as a bounded operator  $\gamma \in \mathscr{L}(H^1 (\Omega), L^2(\Gamma))$ such that $\gamma u$ is the restriction of $u$ to $\Gamma$ for every function $u\in H^1(\Omega)$ which is twice continuously differentiable in $\overline{\Omega}$.


		First, we show that when the approximate solution and actual solution are close to each other, we can control the loss function.
	\begin{theorem}\label{theorem1}
		Let $u$ be the solution of \eqref{a1} and $\alpha, \beta , \epsilon >0$. Then there exists $u_N\in \mathscr{F}_N$ with $\| u-u_N\|_{H^2(\Omega)}\leq\varepsilon$
		such that 
		\[
		\displaystyle \alpha^2 \|\mathscr{L}u_N-f\|_{L^2(\Omega)}^2 + \beta^2\| u_N|_{\partial \Omega}\|_{L^{2}(\partial \Omega)}^2\leq c\varepsilon^2\ \mbox{and}\ \|u_N\|_{H^{2}(\Omega)}\leq \widetilde{M},
		\]
		where $\widetilde{M}$ can be taken to be $2M=2\|u\|_{H^2(\Omega)}$.
	\end{theorem}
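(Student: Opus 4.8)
The plan is to produce the network $u_N$ directly as an approximant of the true solution $u$, and then read off the smallness of the loss from the boundedness of $\mathscr{L}$ together with the trace inequality. First I would invoke the approximation result of Theorem \ref{theorappr}. Since that theorem is stated for $C^k(K)$ functions, whereas elliptic theory only guarantees $u\in H^2(\Omega)$, the initial task is to bridge this regularity gap: I would approximate $u$ in the $H^2(\Omega)$ norm by a smooth function $v$ (using density of $C^\infty(\overline{\Omega})$ in $H^2(\Omega)$) to within $\varepsilon/2$, and then apply Theorem \ref{theorappr} with $k=2$ to $v$ on a compact set $K\supseteq\overline{\Omega}$. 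Because that theorem controls $\|D^\alpha v - D^\alpha\phi_n\|_{C(K)}$ for all $|\alpha|\le 2$, and on the bounded domain $\Omega$ the sup norm dominates the $L^2(\Omega)$ norm (so that $\|w\|_{H^2(\Omega)}\le |\Omega|^{1/2}\,(\sum_{|\alpha|\le 2}\|D^\alpha w\|_{C(K)}^2)^{1/2}$), choosing the complexity $n$ (hence $N$) large enough yields a network $u_N=\phi_n\in\mathscr{F}_N$ with $\|v-u_N\|_{H^2(\Omega)}\le\varepsilon/2$, and thus $\|u-u_N\|_{H^2(\Omega)}\le\varepsilon$ by the triangle inequality.

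Next I would estimate the two terms of the loss separately. For the interior term, since $\mathscr{L}u=f$ by \eqref{a1}, we have $\mathscr{L}u_N-f=\mathscr{L}(u_N-u)$, so the uniform boundedness of $\mathscr{L}\colon H^2(\Omega)\to L^2(\Omega)$ gives $\|\mathscr{L}u_N-f\|_{L^2(\Omega)}\le\|\mathscr{L}\|\,\|u_N-u\|_{H^2(\Omega)}\le\|\mathscr{L}\|\,\varepsilon$. For the boundary term, the choice $g=0$ forces $u|_{\partial\Omega}=0$, so the trace inequality $\|\gamma w\|_{L^2(\partial\Omega)}\le C_\gamma\|w\|_{H^1(\Omega)}$ applied to $w=u_N-u$ yields $\|u_N|_{\partial\Omega}\|_{L^2(\partial\Omega)}\le C_\gamma\|u_N-u\|_{H^1(\Omega)}\le C_\gamma\,\varepsilon$. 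Squaring and combining with the weights $\alpha^2,\beta^2$ then gives the claimed bound with $c=\alpha^2\|\mathscr{L}\|^2+\beta^2 C_\gamma^2$.

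Finally, the $H^2$ bound follows from $\|u_N\|_{H^2(\Omega)}\le\|u\|_{H^2(\Omega)}+\|u_N-u\|_{H^2(\Omega)}\le M+\varepsilon$; restricting attention to $\varepsilon\le M$ (harmless, since the statement concerns small $\varepsilon$) yields $\|u_N\|_{H^2(\Omega)}\le 2M=\widetilde{M}$. The only genuine subtlety I anticipate is the regularity mismatch noted at the outset: the cited approximation theorem requires classical smoothness, whereas the solution is a priori only $H^2$. I expect the intermediate smooth-approximation step, together with ensuring that the architecture $\mathscr{F}_N$ is rich enough (i.e.\ $N$ large) to realize the required complexity $n$, to be the crux of the argument. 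If instead one assumes smooth data, so that $u\in C^2(\overline{\Omega})$ by elliptic regularity, then Theorem \ref{theorappr} applies to $u$ directly and this step simplifies considerably.
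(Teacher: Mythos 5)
Your proposal is correct and follows essentially the same route as the paper: obtain $u_N$ as an $H^2(\Omega)$-close DNN approximant of $u$ via Theorem \ref{theorappr}, bound the interior term using the operator norm of $\mathscr{L}$ (since $\mathscr{L}u_N - f = \mathscr{L}(u_N-u)$), bound the boundary term by the trace inequality (since $u|_{\partial\Omega}=0$), and get the $H^2$ bound from the triangle inequality. The only difference is that you explicitly fill in the smoothing/density step needed to pass from $u\in H^2(\Omega)$ to the $C^k$ setting of Theorem \ref{theorappr}, and the restriction $\varepsilon\le M$ needed for $\widetilde{M}=2M$ — details the paper's proof leaves implicit.
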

On the other hand, we can show that by controlling  the loss function, we can have a good approximation to the solution $u$ of \eqref{a1} by using a DNN. The requisite error estimate is given in the theorem below.
\begin{theorem}\label{thm2}
	Let $u$ be a solution of \eqref{a1} and $\alpha, \beta , \epsilon >0$. Assume that  $u_N\in \mathscr{F}_N$ is such that
	\begin{align}\label{j8}
	\alpha^2 \|\mathscr{L}u_N-f\|_{L^2(\Omega)}^2 + \beta^2\| u_N|_{\partial \Omega}\|_{L^{2}(\partial \Omega)}^2\leq \varepsilon^2,
	\end{align}	
	with $\| u_N\|_{H^{2}(\Omega)}\leq \widetilde{M}.$
	Then
	$$\|u-u_N\|_{H^1(\Omega)} \leq O((M+\widetilde{M})^{1/2}\varepsilon^{1/2}),$$
	and
	$$\|u-u_N\|_{L^2(\Omega)} \leq O((M+\widetilde{M})^{1/3}\varepsilon^{2/3}).$$
\end{theorem}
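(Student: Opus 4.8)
The plan is to work with the error $w:=u-u_N$ and to turn the three facts we know about it into the desired norm bounds. Subtracting \eqref{a1} from the hypothesis \eqref{j8}, $w$ solves $\mathscr{L}w=f-\mathscr{L}u_N=:-r$ in $\Omega$, where $\norm{r}_{L^2(\Omega)}\leq\varepsilon/\alpha$; its trace obeys $\norm{w|_{\partial\Omega}}_{L^2(\partial\Omega)}=\norm{u_N|_{\partial\Omega}}_{L^2(\partial\Omega)}\leq\varepsilon/\beta$ because $g=0$; and a priori $\norm{w}_{H^2(\Omega)}\leq\norm{u}_{H^2(\Omega)}+\norm{u_N}_{H^2(\Omega)}\leq M+\widetilde{M}=:R$. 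The whole difficulty is that $w$ does \emph{not} satisfy a homogeneous Dirichlet condition, so the usual elliptic a priori estimates cannot be quoted directly; the recurring device will be to split every boundary contribution into a \emph{rough but bounded} factor, estimated through the uniform bound $R$ and the trace theorem, and a \emph{small} factor, estimated through $\varepsilon$.

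First I would prove the $H^1$ bound by a standard energy argument. Testing $\mathscr{L}w=-r$ against $w$ and integrating by parts, the principal part yields the Dirichlet bilinear form $a(w,w)$, which is coercive, $a(w,w)\geq c\,\norm{w}_{H^1(\Omega)}^2$ (this is where the well-posedness hypothesis on $\mathscr{L}$, equivalently a G\aa rding inequality, is used), together with a conormal boundary term $\int_{\partial\Omega}(A\nabla w\cdot n)\,w\,dS$ involving the principal coefficients $A$ of $\mathscr{L}$ and the outward normal $n$. The interior term is at most $\norm{r}_{L^2}\norm{w}_{L^2}\leq(\varepsilon/\alpha)\norm{w}_{L^2}$. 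In the boundary term I would absorb the unknown conormal derivative by the trace theorem, $\norm{\nabla w}_{L^2(\partial\Omega)}\leq C\norm{w}_{H^2(\Omega)}\leq CR$, and use the small factor $\norm{w|_{\partial\Omega}}_{L^2(\partial\Omega)}\leq\varepsilon/\beta$, so that this term is at most $CR\varepsilon$. A Young inequality then gives $\norm{w}_{H^1(\Omega)}^2\leq C(\varepsilon^2+R\varepsilon)\leq CR\varepsilon$, that is $\norm{u-u_N}_{H^1(\Omega)}=O(R^{1/2}\varepsilon^{1/2})$. (Should only a G\aa rding inequality be available, the resulting lower-order term $c'\norm{w}_{L^2}^2$ is absorbed afterwards using the $L^2$ bound below, so no circularity arises.)

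For the sharper $L^2$ estimate I would run an Aubin--Nitsche duality argument. Let $\psi$ solve the adjoint problem $\mathscr{L}^{*}\psi=w$ in $\Omega$ with $\psi|_{\partial\Omega}=0$, so that $\norm{\psi}_{H^2(\Omega)}\leq C\norm{w}_{L^2(\Omega)}$. Writing $\norm{w}_{L^2}^2=\int_\Omega w\,\mathscr{L}^{*}\psi\,dx$ and integrating by parts, the interior part becomes $\int_\Omega r\,\psi\,dx\leq(\varepsilon/\alpha)\norm{\psi}_{L^2}\leq C\varepsilon\norm{w}_{L^2}$, while the only surviving boundary term is $\int_{\partial\Omega}w\,(A^{*}\nabla\psi\cdot n)\,dS$, the contributions carrying $\psi$ vanishing since $\psi|_{\partial\Omega}=0$. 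This boundary term is where the exponent is determined: I would treat it as the $H^{1/2}(\partial\Omega)$--$H^{-1/2}(\partial\Omega)$ duality pairing, bounding the conormal derivative by $\norm{A^{*}\nabla\psi\cdot n}_{H^{-1/2}(\partial\Omega)}\leq C\norm{\psi}_{H^2(\Omega)}\leq C\norm{w}_{L^2}$, and controlling the trace of $w$ by interpolating the two scales already in hand, $\norm{w|_{\partial\Omega}}_{L^2(\partial\Omega)}\leq C\varepsilon$ and $\norm{w|_{\partial\Omega}}_{H^{3/2}(\partial\Omega)}\leq C\norm{w}_{H^2}\leq CR$. Since $H^{1/2}=[L^2,H^{3/2}]_{1/3}$, this gives $\norm{w|_{\partial\Omega}}_{H^{1/2}(\partial\Omega)}\leq C\varepsilon^{2/3}R^{1/3}$ (the interpolation weight $2/3$ of $L^2$ is exactly the source of the exponent). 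Collecting terms, $\norm{w}_{L^2}^2\leq C(\varepsilon+\varepsilon^{2/3}R^{1/3})\norm{w}_{L^2}$, and dividing yields $\norm{u-u_N}_{L^2(\Omega)}=O(R^{1/3}\varepsilon^{2/3})$.

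The main obstacle, and the only genuinely non-routine point, is the inhomogeneous boundary data: for an exact solution all of these boundary integrals vanish, and the entire argument consists in showing that the approximate enforcement of the boundary condition costs only a controlled power of $\varepsilon$. The two exponents differ precisely because the $H^1$ estimate pairs the trace at the $L^2(\partial\Omega)$ level against a factor of size $R$, whereas the $L^2$ estimate can afford the weaker $H^{1/2}(\partial\Omega)$ interpolant, improving $\varepsilon^{1/2}$ to $\varepsilon^{2/3}$. Finally, since all constants are independent of $N$, combining these bounds with Theorem \ref{theorem1} and the approximation result Theorem \ref{theorappr} shows that driving the loss to zero as the network complexity grows forces $u_N\to u$ strongly in $H^1(\Omega)$.
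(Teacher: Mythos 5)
Your proposal is correct; the $H^1$ part follows the paper's skeleton, but your $L^2$ argument is a genuinely different proof. For the gradient bound the paper also integrates by parts and splits each term into a ``rough'' factor controlled by $R=M+\widetilde M$ (via the trace theorem) times a ``small'' factor of size $\varepsilon$; the real differences are that you test with the bilinear form of $\mathscr{L}$ and invoke coercivity/G\aa rding, while the paper integrates by parts with the Laplacian and uses $\|\Delta(u-u_N)\|_{L^2(\Omega)}\le c\|\mathscr{L}u_N-f\|_{L^2(\Omega)}$ (a step that for a general second-order elliptic $\mathscr{L}$ needs extra justification, so your variant is the more robust one at that point), and the paper recovers the $L^2$ piece of the $H^1$ norm from a separate lifting lemma (Lemma \ref{aprlem}) rather than from coercivity. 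For the sharper $L^2$ bound the routes truly diverge: the paper sets $\mathscr{L}u_N=f_\varepsilon$ and $u_N|_{\partial\Omega}=g_\varepsilon$, subtracts the lifting $l_\Omega g_\varepsilon$ so that $u_N-l_\Omega g_\varepsilon$ solves a \emph{homogeneous} Dirichlet problem with right-hand side in $H^{-1}(\Omega)$, and applies Lax--Milgram to obtain $\|u-u_N\|_{H^1(\Omega)}\le c\|f-f_\varepsilon\|_{L^2(\Omega)}+c\|g_\varepsilon\|_{H^{1/2}(\partial\Omega)}$, whereas you keep the inhomogeneous boundary data and run an Aubin--Nitsche duality argument against the adjoint problem $\mathscr{L}^*\psi=w$, $\psi|_{\partial\Omega}=0$. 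Both proofs hinge on the identical trace interpolation $\|g_\varepsilon\|_{H^{1/2}(\partial\Omega)}\le \|g_\varepsilon\|_{L^2(\partial\Omega)}^{2/3}\|g_\varepsilon\|_{H^{3/2}(\partial\Omega)}^{1/3}=O\left(R^{1/3}\varepsilon^{2/3}\right)$ (the paper's \eqref{j7}), so the exponent $2/3$ has the same origin in both. The trade-off: your duality route is self-contained (no lifting operator) but requires $H^2$ regularity of the adjoint problem (convex or $C^{1,1}$ domain, regular coefficients) and the careful ordering you note, proving the $L^2$ bound first so the G\aa rding lower-order term can be absorbed; the paper's route needs only Lax--Milgram plus boundedness of $l_\Omega: H^{1/2}(\partial\Omega)\to H^1(\Omega)$, and as written it in fact yields the stronger conclusion $\|u-u_N\|_{H^1(\Omega)}=O\left(R^{1/3}\varepsilon^{2/3}\right)$, not merely the stated $L^2$ estimate.
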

Observe that in the Theorem above, as expected, the error estimate in the $L^2$-norm is stronger than the error estimate in the $H^1$-norm. We show in the theorem below how the error estimate in the 
$L^2$-norm can be improved further by altering the loss function.
	\begin{theorem}\label{thmay1}
	Let $u$ be a solution of \eqref{a1} and let $u_N\in \mathscr{F}_N$ be such that
	\begin{align}\label{j12}
	\alpha^2 \|\mathscr{L}u_N-f\|_{L^2(\Omega)}^2 + \beta^2\| u_N|_{\partial \Omega}\|_{H^{1/2}(\partial \Omega)}^2 \leq\varepsilon^2,
	\end{align}	
	with $\| u_N\|_{H^{2}(\Omega)}\leq \widetilde{M}.$
	Then
	$$\|u-u_N\|_{L^2(\Omega)} < O(\varepsilon).$$
\end{theorem}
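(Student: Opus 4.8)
The plan is to set $w := u - u_N$ and reduce the estimate to the variational well-posedness of the homogeneous Dirichlet problem, exploiting that $H^{1/2}(\partial\Omega)$ is precisely the trace space of $H^1(\Omega)$. First I would record the two consequences of \eqref{j12}: since $\mathscr{L}u = f$ and $u|_{\partial\Omega}=0$, we have $\mathscr{L}w = f - \mathscr{L}u_N$ with $\|\mathscr{L}w\|_{L^2(\Omega)} \le \varepsilon/\alpha$, and $w|_{\partial\Omega} = -\,u_N|_{\partial\Omega}$ with $\|w|_{\partial\Omega}\|_{H^{1/2}(\partial\Omega)} \le \varepsilon/\beta$. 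Thus $w$ solves an elliptic problem whose data (interior source in $L^2$, boundary datum in $H^{1/2}$) are both $O(\varepsilon)$ in the natural norms.

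Next I would remove the inhomogeneous boundary condition by lifting. By the trace theorem there is a bounded extension operator $E : H^{1/2}(\partial\Omega) \to H^1(\Omega)$; set $\widetilde w := E\big(w|_{\partial\Omega}\big)$, so that $\widetilde w|_{\partial\Omega} = w|_{\partial\Omega}$ and $\|\widetilde w\|_{H^1(\Omega)} \le C\,\varepsilon/\beta$. The function $v := w - \widetilde w$ then lies in $H^1_0(\Omega)$ and satisfies $\mathscr{L}v = (f - \mathscr{L}u_N) - \mathscr{L}\widetilde w$ in the weak sense. Because $\mathscr{L}$ is a second-order operator with bounded coefficients, it maps $H^1(\Omega)$ into $H^{-1}(\Omega)$ boundedly, so $\|\mathscr{L}\widetilde w\|_{H^{-1}(\Omega)} \le C\|\widetilde w\|_{H^1(\Omega)}$, and hence $\|\mathscr{L}v\|_{H^{-1}(\Omega)} \le \|\mathscr{L}w\|_{L^2(\Omega)} + \|\mathscr{L}\widetilde w\|_{H^{-1}(\Omega)} \le C\varepsilon$.

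Finally I would invoke the well-posedness of \eqref{a1}: the homogeneous Dirichlet problem being well-posed (as assumed, with the $H^2$–$L^2$ bound, which in weak form makes $\mathscr{L} : H^1_0(\Omega)\to H^{-1}(\Omega)$ an isomorphism) yields the energy estimate $\|v\|_{H^1(\Omega)} \le C\|\mathscr{L}v\|_{H^{-1}(\Omega)} \le C\varepsilon$. Combining with the bound on $\widetilde w$ gives $\|w\|_{H^1(\Omega)} \le \|v\|_{H^1(\Omega)} + \|\widetilde w\|_{H^1(\Omega)} \le C\varepsilon$, and a fortiori $\|u-u_N\|_{L^2(\Omega)} \le \|w\|_{H^1(\Omega)} = O(\varepsilon)$, as claimed.

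I expect the main point to be conceptual rather than a computational obstacle: the improvement from the $\varepsilon^{2/3}$ rate of Theorem \ref{thm2} to the linear rate here hinges entirely on measuring the boundary residual in $H^{1/2}(\partial\Omega)$ instead of $L^2(\partial\Omega)$. Only an $H^{1/2}$ datum can be lifted to an $H^1(\Omega)$ function with controlled norm, which is exactly what makes the clean energy estimate available; an $L^2(\partial\Omega)$ datum cannot be so lifted, which is why the earlier argument had to interpolate against the $H^2$ bound $\widetilde M$ and thereby lost a power of $\varepsilon$. The one technical care needed is to interpret $\mathscr{L}\widetilde w$ distributionally in $H^{-1}(\Omega)$ (since $\widetilde w$ is only $H^1$), and to confirm that uniform ellipticity together with the assumed well-posedness indeed delivers the $H^1_0$–$H^{-1}$ isomorphism bound with a constant independent of $\varepsilon$.
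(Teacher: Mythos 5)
Your proof is correct and follows essentially the same route as the paper's: you lift the $H^{1/2}(\partial\Omega)$ boundary residual to an $H^1(\Omega)$ function (your $\widetilde w$ is, up to sign, the paper's $\mathnormal{l}_{\Omega}g_{\varepsilon}$), homogenize to get a function in $H^1_0(\Omega)$, apply the Lax--Milgram well-posedness estimate in the $H^1_0$--$H^{-1}$ duality, and conclude by the triangle inequality. Your closing remark about why the $H^{1/2}$ boundary norm is exactly what permits the linear rate (versus the interpolation loss in Theorem \ref{thm2}) matches the paper's motivation for introducing this modified loss function.
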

		\subsection{Incompressible Navier-Stokes Equations}
		The incompressible Navier-Stokes equations (NSE) are given by
		\begin{equation}\label{c2}
		\begin{split}
	\partial_t u - \Delta u + u \cdot \nabla u + \nabla p &= f,\\
		\nabla \cdot u &= 0, \\
		u|_{\partial \Omega} &= 0,\\
		u(x, 0)&=u_0(x), x \in \Omega.
		\end{split}
		\end{equation}
	In \eqref{c2}, $u$ denotes the velocity of the fluid and $p$ the pressure.
		Similar to the elliptic case, we show that when applying the $\mathscr{F}_N$ on the Navier-Stokes equations, with a small loss function, the approximate solution and actual solution are close to each other.
		\begin{theorem}\label{thmay3}
		Assume that $u$ is a strong solution of the 2D NSE (\ref{c2}) and  $\tilde{u}_N \in \mathscr{F}_N$ such that
		\begin{align}\label{cn1}
		&\|\tilde{u}_N|_{\partial \Omega}\|_{L^4([0, T]; H^{1/2}(\partial \Omega))}^4
		+\|\tilde{u}_N (x, 0) - u_0(x)\|_{L^2(\Omega)}^2 \\ \nonumber
		&+\|\partial_t \tilde{u}_N - \Delta \tilde{u}_N + \tilde{u}_N\cdot \nabla \tilde{u}_N + \nabla \tilde{p}_N - f\|_{L^2(\Omega\times [0, T] )}^2 \\ \nonumber
		&+ \|\nabla \cdot \tilde{u}_N\|_{L^4([0, T]; L^2(\Omega))}^4
		+ \lambda\|\tilde{u}_N\|_{L^4([0, T]; H^1(\Omega))}^4 \leq \varepsilon^2.
		\end{align}
		
		Then
		\begin{align}
		\label{diff1}
		\|u-\tilde{u}_N\|_{L^4([0, T]; L^2(\Omega))}\leq O \left(\varepsilon^{1/2}+\frac{\varepsilon}{{\lambda}^{1/4}}\right).
		\end{align}
	\end{theorem}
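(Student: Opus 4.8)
The plan is to control the error $e:=u-\tilde u_N$ by an energy method, using a Hodge decomposition to eliminate the (uncontrolled) pressure difference and to convert the non-vanishing divergence and boundary values of $\tilde u_N$ into the loss terms of \eqref{cn1}. Subtracting \eqref{c2} from the residual form of the equation satisfied by $(\tilde u_N,\tilde p_N)$ and writing $u\cdot\nabla u-\tilde u_N\cdot\nabla\tilde u_N=u\cdot\nabla e+e\cdot\nabla\tilde u_N$, I obtain
\begin{equation*}
\partial_t e-\Delta e+u\cdot\nabla e+e\cdot\nabla\tilde u_N+\nabla\pi=-R,\qquad\pi:=p-\tilde p_N,
\end{equation*}
together with $\Div e=-\Div\tilde u_N$, $e|_{\partial\Omega}=-\tilde u_N|_{\partial\Omega}$ and $e(\cdot,0)=u_0-\tilde u_N(\cdot,0)$. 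By \eqref{cn1} each defect is small: $\norm{R}_{L^2(\Omega\times[0,T])}\le\varepsilon$, $\norm{e(\cdot,0)}_{L^2(\Omega)}\le\varepsilon$, $\norm{\Div\tilde u_N}_{L^4([0,T];L^2)}\le\varepsilon^{1/2}$, $\norm{\tilde u_N|_{\partial\Omega}}_{L^4([0,T];H^{1/2})}\le\varepsilon^{1/2}$, and the regularization bound reads $\norm{\tilde u_N}_{L^4([0,T];H^1)}\le\varepsilon^{1/2}/\lambda^{1/4}$.

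Next I would introduce the Hodge decomposition $\tilde u_N=w_N+\nabla\varphi_N$, where $w_N=\mathbb{P}\tilde u_N$ is divergence-free with vanishing normal trace and $\varphi_N$ solves the Neumann problem $\Delta\varphi_N=\Div\tilde u_N$ in $\Omega$, $\partial_n\varphi_N=\tilde u_N\cdot n$ on $\partial\Omega$. Elliptic regularity gives $\norm{\varphi_N}_{H^2(\Omega)}\le c\bigl(\norm{\Div\tilde u_N}_{L^2(\Omega)}+\norm{\tilde u_N|_{\partial\Omega}}_{H^{1/2}(\partial\Omega)}\bigr)$, so that, taking $L^4$ in time,
\begin{equation*}
\norm{\nabla\varphi_N}_{L^4([0,T];L^2(\Omega))}\le c\bigl(\norm{\Div\tilde u_N}_{L^4([0,T];L^2)}+\norm{\tilde u_N|_{\partial\Omega}}_{L^4([0,T];H^{1/2})}\bigr)\le c\,\varepsilon^{1/2}.
\end{equation*}
Since $u$ is divergence-free with zero trace, $\mathbb{P}u=u$, hence $E:=\mathbb{P}e=u-w_N$ and $u-\tilde u_N=E-\nabla\varphi_N$. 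By the triangle inequality it therefore suffices to estimate the divergence-free error $E$ in $L^4([0,T];L^2)$, the corrector $\nabla\varphi_N$ being already of size $\varepsilon^{1/2}$.

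The energy estimate is obtained by testing the error equation against $E$. Because $E$ is divergence-free with vanishing normal trace, the pressure term drops out, $\int\nabla\pi\cdot E=0$, and the corrector contributes nothing to the time derivative, so $\int\partial_t e\cdot E=\tfrac12\tfrac{d}{dt}\norm{E}_{L^2}^2$. Integration by parts in the viscous term gives $-\int\Delta e\cdot E=\norm{\nabla E}_{L^2}^2-\int_{\partial\Omega}(\partial_n E)\cdot E$, the boundary integral being controlled by the $H^{1/2}$ boundary loss. Writing $e=E-\nabla\varphi_N$, the transport term $\int(u\cdot\nabla E)\cdot E$ vanishes and leaves only a corrector cross-term of size $\varepsilon^{1/2}$, while the remaining nonlinear term is handled in two dimensions by Ladyzhenskaya's inequality,
\begin{equation*}
\Bigl|\int(E\cdot\nabla\tilde u_N)\cdot E\Bigr|\le c\,\norm{E}_{L^2}\,\norm{\nabla E}_{L^2}\,\norm{\tilde u_N}_{H^1},
\end{equation*}
after which Young's inequality absorbs the gradient into the viscous term. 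Integrating in time, Hölder in $t$ together with the regularization bound $\norm{\tilde u_N}_{L^4([0,T];H^1)}\le\varepsilon^{1/2}/\lambda^{1/4}$ controls the nonlinear interaction; this is exactly where the $\lambda$-dependence enters, and, combined with the residual bound $\norm{R}_{L^2([0,T];L^2)}\le\varepsilon$ (treated by Cauchy--Schwarz), it produces the $\varepsilon/\lambda^{1/4}$ contribution. A Gronwall/absorption argument then yields $\norm{E}_{L^4([0,T];L^2)}\le O(\varepsilon^{1/2}+\varepsilon/\lambda^{1/4})$, and adding the corrector bound gives \eqref{diff1}.

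The main obstacle is the joint treatment of the uncontrolled pressure difference $\pi$ and the fact that $e$ is neither divergence-free nor zero on $\partial\Omega$: testing against the Leray projection $E=\mathbb{P}e$ annihilates the pressure and the normal/divergence defect, while the elliptic estimate for $\varphi_N$ converts the divergence and boundary losses into the $\varepsilon^{1/2}$ term. The most delicate point is the viscous boundary integral $\int_{\partial\Omega}(\partial_n E)\cdot E$, arising from the tangential trace of $E$, which is not removed by $\mathbb{P}$; controlling it requires measuring the boundary residual in $H^{1/2}(\partial\Omega)$ rather than in $L^2(\partial\Omega)$, which is precisely why the boundary loss in \eqref{cn1} is taken in the $H^{1/2}$ norm (cf.\ Theorem~\ref{thmay1} in the elliptic case). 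Finally, closing the quadratic two-dimensional nonlinearity by absorption is what forces the regularization norm $\norm{\tilde u_N}_{L^4([0,T];H^1)}$ into the estimate, and hence the appearance of $\lambda$ in the final bound.
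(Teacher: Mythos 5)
Your overall strategy is essentially the paper's: Hodge-decompose $\tilde u_N$ into a divergence-free part plus a gradient corrector, bound the corrector by $O(\varepsilon^{1/2})$ via elliptic regularity (this is the paper's Lemma \ref{lemma5.2}, which you compress into a single Neumann problem), run an energy--Gronwall estimate on the divergence-free error with the $\lambda$-weighted $H^1$ loss controlling the nonlinear interaction, and finish with the triangle inequality. The difference lies in how the energy estimate is implemented, and that is exactly where your argument has a genuine gap.

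The paper applies $\mathbb{P}$ to the residual equation and works entirely in the $V$--$V'$ duality: every defect ($\mathbb{P}\tilde f$, $\mathbb{P}\Delta v_N$, the commutator $\psi$) is measured in $V'$, i.e.\ paired against test functions in $V$, which vanish on $\partial\Omega$, so no boundary integrals ever arise (e.g.\ $\|\mathbb{P}\Delta v_N\|_{V'}\le\|\nabla v_N\|_{L^2}$ comes from integrating by parts against $w\in V$). You instead test the unprojected error equation against $E=\mathbb{P}e$, and since $E$ has only vanishing \emph{normal} trace, integration by parts of the viscous term leaves the boundary integral $\int_{\partial\Omega}(\partial_n E)\cdot E\,dS$. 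Your claim that this is ``controlled by the $H^{1/2}$ boundary loss'' is unjustified: the loss \eqref{cn1} controls only the factor $E|_{\partial\Omega}$ in $H^{1/2}(\partial\Omega)$. To use the $H^{-1/2}\times H^{1/2}$ duality you also need a quantitative bound on $\partial_n E$ in $H^{-1/2}(\partial\Omega)$, which by the normal-trace theorem requires a bound on $\|\Delta E\|_{L^2(\Omega)}$, hence essentially on $\|\Delta\tilde u_N\|_{L^2(\Omega)}$. No such bound exists: \eqref{cn1} controls $\tilde u_N$ only in $H^1$ (and weakly at that, with the factor $\lambda$) and only the \emph{combined} PDE residual, never $\Delta\tilde u_N$ by itself, so $\partial_n E$ can be arbitrarily large relative to $\varepsilon$. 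Worse, even if one granted an $O(1)$ dual-norm bound on $\partial_n E$, the boundary term would contribute $O(\varepsilon^{1/2})$ to the differential inequality for $\|E\|_{L^2}^2$, giving only $\|E\|_{L^4([0,T];L^2)}\le O(\varepsilon^{1/4})$ --- strictly weaker than \eqref{diff1}. To close the argument you must avoid this term altogether, as the paper does, by projecting the equation first and estimating all defects in $V'$; note that this is also why the paper needs its (delicate) assertion that $u_N=\mathbb{P}\tilde u_N$ vanishes on $\partial\Omega$, so that $w=u-u_N$ is an admissible test function in $V$ in its Lemma \ref{lemma1}.
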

The reverse direction of Theorem \ref{thmay3} has also been proved.
	\begin{theorem}\label{thmay6}
	Given any $\varepsilon>0$, we can find $\tilde{u}_N \in \mathscr{F}_N$, such that
	\begin{align}
	\label{dec8}
	&	\| \tilde{u}_N|_{\partial \Omega}\|_{L^4([0, T]; H^{1/2}(\partial \Omega))}^4+\| \tilde{u}_N (x, 0) - u_0(x)\|_{L^2(\Omega)}^2 \\ \nonumber
	&	+\| \partial_t \tilde{u}_N - \Delta \tilde{u}_N + \tilde{u}_N\cdot \nabla \tilde{u}_N + \nabla \tilde{p}_N - f\|_{L^2(\Omega\times [0, T] )}^2 \\ \nonumber
	&	+ \| \nabla \cdot \tilde{u}_N \|_{L^4([0, T] ; L^2(\Omega))}^4
	+ \lambda \| \tilde{u}_N\|_{L^4([0, T]; H^1(\Omega))}^4\leq O\left(\varepsilon^2\right).
	\end{align}
\end{theorem}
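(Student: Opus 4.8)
The plan is to build the approximants directly from the exact strong solution $(u,p)$ of \eqref{c2} and to exploit that this pair annihilates every term in \eqref{dec8} that is matched against the data. Since $u$ is a strong solution of the two–dimensional system, assuming $f$ and $u_0$ are smooth enough and satisfy the natural compatibility condition at $t=0$, we have $u\in C^2(K)$ and $p\in C^1(K)$ on the compact space–time cylinder $K=\overline{\Omega}\times[0,T]\subset\R^3$. This is exactly the regularity demanded by Theorem \ref{theorappr}, applied with $(x,t)$ as the independent variable.

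First I would invoke Theorem \ref{theorappr} componentwise, with $k=2$ for $u$ and $k=1$ for $p$, to obtain $\tilde{u}_N,\tilde{p}_N\in\mathscr{F}_N$ with
\[
\sum_{|\alpha|\le 2}\|D^{\alpha}(u-\tilde{u}_N)\|_{C(K)}+\sum_{|\beta|\le 1}\|D^{\beta}(p-\tilde{p}_N)\|_{C(K)}\le\delta,
\]
where the multi-indices range over space–time derivatives and $\delta\to0$ as the complexity $N\to\infty$. In particular $\partial_t\tilde{u}_N$, $\nabla\tilde{u}_N$, $\Delta\tilde{u}_N$ and $\nabla\tilde{p}_N$ are all uniformly close to their exact counterparts.

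Next I would bound the four ``defect'' terms of \eqref{dec8} by subtracting the identity satisfied by the exact solution. Using $u(\cdot,0)=u_0$, $u|_{\partial\Omega}=0$, the trace inequality $\|v|_{\partial\Omega}\|_{H^{1/2}(\partial\Omega)}\le C\|v\|_{H^1(\Omega)}$, and $\nabla\cdot u=0$, the initial, boundary, and divergence terms are each controlled by a constant multiple of $\delta$ (the time integration of the fourth powers merely produces a factor involving $T$). The main obstacle is the momentum–residual term. Writing
\[
\partial_t\tilde{u}_N-\Delta\tilde{u}_N+\tilde{u}_N\cdot\nabla\tilde{u}_N+\nabla\tilde{p}_N-f=\partial_t(\tilde{u}_N-u)-\Delta(\tilde{u}_N-u)+\big(\tilde{u}_N\cdot\nabla\tilde{u}_N-u\cdot\nabla u\big)+\nabla(\tilde{p}_N-p),
\]
the three linear terms are $O(\delta)$ in $L^2(\Omega\times[0,T])$ directly from the $C^2$/$C^1$ convergence. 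For the convective term I would use the bilinear splitting
\[
\tilde{u}_N\cdot\nabla\tilde{u}_N-u\cdot\nabla u=(\tilde{u}_N-u)\cdot\nabla\tilde{u}_N+u\cdot\nabla(\tilde{u}_N-u),
\]
and bound it in $L^2$ by $C(\|u\|_{C^1(K)}+1)\delta$, using the uniform bounds on $\nabla\tilde{u}_N$ and $u$ together with the smallness of $\tilde{u}_N-u$ and $\nabla(\tilde{u}_N-u)$. Hence the entire residual is $O(\delta)$ and its square $O(\delta^2)$.

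Finally, the regularization term obeys $\lambda\|\tilde{u}_N\|_{L^4([0,T];H^1(\Omega))}^4\le\lambda\big(\|u\|_{L^4([0,T];H^1(\Omega))}+CT^{1/4}\delta\big)^4$, a fixed multiple of $\lambda$. Collecting the estimates, every defect term is $O(\delta^2)$ or $O(\delta^4)$, so choosing $N$ large enough that $\delta\le c\varepsilon$ (and, to bring the regularization term to the same order, taking $\lambda=O(\varepsilon^2)$) yields \eqref{dec8}. Beyond the routine approximation and trace estimates, the only genuinely new ingredient is the bilinear bound on the convective term, which is where the strong ($C^2$) approximation of $u$ is essential.
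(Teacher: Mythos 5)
Your proposal is correct and follows essentially the same route as the paper: approximate the exact strong solution $(u,p)$ by $(\tilde{u}_N,\tilde{p}_N)$ via Theorem \ref{theorappr} (the paper packages this as Lemma \ref{lemforuN}), bound each term of the loss by subtracting the exact equation, handle the convective term with the identical bilinear splitting $(\tilde{u}_N-u)\cdot\nabla\tilde{u}_N+u\cdot\nabla(\tilde{u}_N-u)$, and take $\lambda$ small depending on $\varepsilon$. The only difference is cosmetic: you work in uniform $C^2(K)$/$C^1(K)$ norms on the space--time cylinder (making explicit the smoothness and compatibility assumptions the paper leaves implicit), whereas the paper phrases the intermediate approximation in mixed Sobolev--Bochner norms.
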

Furthermore, we prove that our scheme is approximately stable.
		\begin{theorem}\label{thmay4}
		Assume $\tilde u_{N_1} \in \mathscr{F}_{N_1}$ is the approximate solution of
		\begin{equation}\label{dec8n}
		\begin{split}
		\frac{\partial}{\partial t} u_1 - \Delta u_1 + u_1 \cdot \nabla u_1 + \nabla p_1 &= f_1, \\
		\nabla \cdot u_1 &= 0, \\
		u_1|_{\partial \Omega} &= 0, \\
		u_1 (x, 0)&=u_{0, 1} (x).
		\end{split}
		\end{equation}
		Assume $\tilde u_{N_2} \in \mathscr{F}_{N_2}$ is the approximate solution of
		\begin{equation}\label{dec9n}
		\begin{split}
		\frac{\partial}{\partial t} u_2 - \Delta u_2 + u_2 \cdot \nabla u_2 + \nabla p_2 &= f_2, \\
		\nabla \cdot u_2 &= 0, \\
		u_2|_{\partial \Omega} &= 0,\\
		u_2 (x, 0)&=u_{0, 2} (x).
		\end{split}
		\end{equation}
		Here, $\tilde u_{N_1}$ and $\tilde u_{N_2}$ satisfy (\ref{cn1}) with corresponding $f_1$ and $f_2$.
		Then, we have
			\begin{align*}
			&\|\tilde u_{N_1}-\tilde u_{N_2}\|_{L^4([0, T]; L^2(\Omega))}\leq\\ &O\left(\varepsilon^{1/2}+\frac{\varepsilon}{{\lambda}^{1/4}}+\|u_{0, 1}-u_{0, 2}\|_{L^2(\Omega)}+\|f_1 -f_2\|_{L^4([0, T]; L^2(\Omega))}\right).
				\end{align*}
	\end{theorem}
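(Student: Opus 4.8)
The plan is to reduce the stability of the two approximate solutions to the continuous dependence of the \emph{true} strong solutions on the data, using Theorem \ref{thmay3} as the bridge. Let $u_1$ and $u_2$ denote the strong solutions of \eqref{dec8n} and \eqref{dec9n}. Since $\tilde u_{N_1}$ satisfies the loss bound \eqref{cn1} associated with $f_1$, Theorem \ref{thmay3} immediately gives
\[
\|u_1 - \tilde u_{N_1}\|_{L^4([0,T];L^2(\Omega))} \leq O\left(\varepsilon^{1/2} + \frac{\varepsilon}{\lambda^{1/4}}\right),
\]
and the analogous bound holds for $u_2$ and $\tilde u_{N_2}$. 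By the triangle inequality,
\[
\|\tilde u_{N_1} - \tilde u_{N_2}\|_{L^4([0,T];L^2(\Omega))} \leq \|\tilde u_{N_1} - u_1\|_{L^4([0,T];L^2(\Omega))} + \|u_1 - u_2\|_{L^4([0,T];L^2(\Omega))} + \|u_2 - \tilde u_{N_2}\|_{L^4([0,T];L^2(\Omega))},
\]
so it remains to bound the middle term $\|u_1 - u_2\|_{L^4([0,T];L^2(\Omega))}$ by $O\big(\|u_{0,1}-u_{0,2}\|_{L^2(\Omega)} + \|f_1-f_2\|_{L^4([0,T];L^2(\Omega))}\big)$. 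This continuous-dependence estimate for the 2D NSE is the heart of the argument.

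To prove it, I would set $w = u_1 - u_2$, $q = p_1 - p_2$, $g = f_1 - f_2$, and subtract the momentum equations. Rewriting the nonlinearity as $u_1\cdot\nabla u_1 - u_2\cdot\nabla u_2 = u_1\cdot\nabla w + w\cdot\nabla u_2$ yields
\[
\partial_t w - \Delta w + u_1\cdot\nabla w + w\cdot\nabla u_2 + \nabla q = g, \qquad \nabla\cdot w = 0, \qquad w|_{\partial\Omega} = 0 .
\]
Testing against $w$: the pressure term drops by incompressibility, the transport term $\int_\Omega (u_1\cdot\nabla w)\cdot w\,dx$ vanishes because $\nabla\cdot u_1 = 0$ and $w$ vanishes on $\partial\Omega$, and one obtains the energy identity
\[
\frac{1}{2}\frac{d}{dt}\|w\|_{L^2(\Omega)}^2 + \|\nabla w\|_{L^2(\Omega)}^2 = -\int_\Omega (w\cdot\nabla u_2)\cdot w\,dx + \int_\Omega g\cdot w\,dx .
\]

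The crucial two-dimensional ingredient is then the Ladyzhenskaya inequality $\|w\|_{L^4(\Omega)}^2 \leq C\|w\|_{L^2(\Omega)}\|\nabla w\|_{L^2(\Omega)}$, which I would use to tame the dangerous term via
\[
\left|\int_\Omega (w\cdot\nabla u_2)\cdot w\,dx\right| \leq \|w\|_{L^4(\Omega)}^2\,\|\nabla u_2\|_{L^2(\Omega)} \leq \frac{1}{2}\|\nabla w\|_{L^2(\Omega)}^2 + C\|\nabla u_2\|_{L^2(\Omega)}^2\,\|w\|_{L^2(\Omega)}^2 .
\]
Absorbing $\tfrac{1}{2}\|\nabla w\|_{L^2(\Omega)}^2$ on the left and bounding $\int_\Omega g\cdot w\,dx$ by Cauchy--Schwarz and Young, I reach a differential inequality of Gr\"onwall type, $\frac{d}{dt}\|w\|_{L^2(\Omega)}^2 \leq C\big(1+\|\nabla u_2\|_{L^2(\Omega)}^2\big)\|w\|_{L^2(\Omega)}^2 + \|g\|_{L^2(\Omega)}^2$. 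Since $u_2$ is a strong solution, $\int_0^T\|\nabla u_2(t)\|_{L^2(\Omega)}^2\,dt < \infty$, so Gr\"onwall's inequality delivers
\[
\sup_{t\in[0,T]}\|w(t)\|_{L^2(\Omega)}^2 \leq C_T\left(\|u_{0,1}-u_{0,2}\|_{L^2(\Omega)}^2 + \|g\|_{L^2(\Omega\times[0,T])}^2\right).
\]
Converting the time norms on the bounded interval $[0,T]$ via H\"older ($\|\cdot\|_{L^2([0,T])}\leq T^{1/4}\|\cdot\|_{L^4([0,T])}$ and $\|w\|_{L^4([0,T];L^2(\Omega))}\leq T^{1/4}\sup_t\|w\|_{L^2(\Omega)}$) and combining with the two Theorem \ref{thmay3} estimates then gives the claim.

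I expect the main obstacle to be the Gr\"onwall step: one must verify that the exponential factor $\exp\big(C\int_0^T(1+\|\nabla u_2\|_{L^2(\Omega)}^2)\,dt\big)$ is finite and controlled purely in terms of the strong-solution norm of $u_2$, which rests essentially on the global-in-time $L^\infty([0,T];H^1)\cap L^2([0,T];H^2)$ regularity of the 2D strong solution. Care is also needed so that the constant $C_T$ hidden in the $O(\cdot)$ depends on the data only through these a priori bounds, so the estimate is genuinely a stability statement; the passage from the quadratic Gr\"onwall bound to a linear one (taking square roots) and the $L^2$-to-$L^4$ H\"older conversions in time are then routine.
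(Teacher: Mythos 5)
Your proposal is correct and follows essentially the same route as the paper: the triangle inequality through the true strong solutions $u_1,u_2$, two applications of Theorem \ref{thmay3} to bound $\|\tilde u_{N_i}-u_i\|_{L^4([0,T];L^2(\Omega))}$, and a continuous-dependence estimate for the 2D NSE to handle $\|u_1-u_2\|_{L^4([0,T];L^2(\Omega))}$. The only difference is that the paper simply invokes ``the stability of solutions of NSE'' for that middle term without proof, whereas you supply the standard argument (energy identity for $w=u_1-u_2$, Ladyzhenskaya's inequality, Gr\"onwall, and the H\"older conversions between $L^2$ and $L^4$ in time), which is exactly the estimate the paper is implicitly relying on.
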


	\section{Elliptic Equations: Proofs of Main Theorems}\label{Sec:elliptic}
	

	\textbf{Proof of Theorem \ref{theorem1}:}
	
	\begin{proof}
	We remark first that given any $\epsilon>0$, by Theorem \ref{theorappr}, there exists a DNN ${\mathscr F}_N$ of complexity $N$ and $u_N \in {\mathscr F}_N$ such that $\|u-u_N\|_{H^2(\Omega)} \le \epsilon$.
		\begin{align*}
			\|\mathscr{L}u_N-f\|_{L^2(\Omega)}^2&= \|\mathscr{L}u_N-\mathscr{L}u\|_{L^2(\Omega)}^2 \\ \nonumber
			&\leq C_{\mathscr{L}}^2\|u_N-u\|_{H^2(\Omega)}^2 \\ \nonumber
			&\leq C_{\mathscr{L}}^2\varepsilon^2,
		\end{align*}
		where $C_{\mathscr{L}}$ is the operator norm bound of 
		${\mathscr{L}}$.
		Therefore
		\begin{align}\label{j3}
			\alpha^2\|\mathscr{L}u_N-f\|_{L^2(\Omega)}^2\leq C_{\mathscr{L}}\alpha^2 \varepsilon^2.
		\end{align}
		We also  have
		\begin{align*}
			\|u_N|_{\partial \Omega}\|_{L^2(\partial \Omega)}^2&= \|u_N|_{\partial \Omega}-u|_{\partial \Omega}\|_{L^2(\partial \Omega)}^2 \\ \nonumber
			&\leq C_{Tr}\|u_N-u\|_{H^2(\Omega)}^2 \\ \nonumber
			&\leq C_{Tr}\varepsilon^2,
		\end{align*}
		where $C_{Tr}$ is the constant from the trace operator.
		Therefore
		\begin{align}\label{j4}
			\beta^2\| u_N|_{\partial \Omega}\|_{L^{2}(\partial \Omega)}^2\leq C_{Tr}\beta^2 \varepsilon^2.
		\end{align}
		Combining (\ref{j3}) and (\ref{j4}), we have 
		\[
		\displaystyle \alpha^2 \|\mathscr{L}u_N-f\|_{L^2(\Omega)}^2 + \beta^2\| u_N|_{\partial \Omega}\|_{L^{2}(\partial \Omega)}^2\leq c\varepsilon^2.
		\]
		Finally,
		\begin{align*}
			\| u_N\|_{H^{2}(\Omega)}&\leq \|u_N-u\|_{H^{2}(\Omega)}+\|u\|_{H^{2}(\Omega)}\\ \nonumber
			&\leq {\varepsilon}+M=\widetilde{M}.
		\end{align*}	
		
		
	\end{proof}
	
	Let us consider the converse of Theorem \ref{theorem1}. Same as in the previous settings, $u$ is the unique solution of \eqref{a1} and $\mathscr{F}_N$ is a DNN. We have the following results.
	
\begin{lemma}\label{aprlem}
 If $\displaystyle \| u_N\|_{H^{2}(\Omega)}\leq \widetilde{M},$ $\displaystyle \| \nabla (u-u_N)\|_{L^{2}(\Omega)}^2\leq c \varepsilon (M+\widetilde{M}) \left(\frac{1}{\alpha}+\frac{1}{\beta}\right),$ and $\displaystyle \| (u-u_N)|_{\partial \Omega}\|_{L^{2}(\partial \Omega)}^2<\frac{\varepsilon^2}{\beta^2}$, then 
	\begin{align} \label{l2u}
\|u-u_N\|_{L^{2}(\Omega)}^2< O((M+\widetilde{M}) \varepsilon).
	\end{align}
	\end{lemma}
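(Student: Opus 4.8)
The plan is to write $w := u - u_N$ and observe that the two nontrivial hypotheses control, respectively, the interior gradient $\|\nabla w\|_{L^2(\Omega)}$ and the boundary trace $\|w|_{\partial \Omega}\|_{L^2(\partial \Omega)}$, whereas the desired conclusion is a bound on the \emph{full} norm $\|w\|_{L^2(\Omega)}$. The natural device linking these quantities is a Poincar\'e-type inequality that retains the boundary contribution: for every $v \in H^1(\Omega)$ on a bounded Lipschitz domain one has
\[
\|v\|_{L^2(\Omega)}^2 \le C_\Omega\left(\|\nabla v\|_{L^2(\Omega)}^2 + \|v|_{\partial \Omega}\|_{L^2(\partial \Omega)}^2\right),
\]
with $C_\Omega$ depending only on $\Omega$. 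First I would apply this inequality to $v = w$, noting that $w = u - u_N \in H^2(\Omega) \subset H^1(\Omega)$ since $u$ solves \eqref{a1} and $u_N$ is smooth, so that the trace appearing on the right is well defined and coincides with the restriction.

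Second, I would substitute the two hypotheses into the right-hand side, obtaining
\[
\|w\|_{L^2(\Omega)}^2 \le C_\Omega\left(c\,\varepsilon(M+\widetilde{M})\Bigl(\tfrac1\alpha + \tfrac1\beta\Bigr) + \frac{\varepsilon^2}{\beta^2}\right).
\]
With $\alpha,\beta$ fixed, the first term is already of the claimed order $O((M+\widetilde{M})\varepsilon)$. For the residual term $\varepsilon^2/\beta^2$, I would note that $M+\widetilde{M}$ is bounded below by a fixed positive constant (it is a sum of $H^2$-norms, and the statement is vacuous when $u\equiv 0$), so that $\varepsilon^2 \le (M+\widetilde{M})\,\varepsilon$ for all small $\varepsilon$; hence this contribution is absorbed into the same $O((M+\widetilde{M})\varepsilon)$, and the estimate \eqref{l2u} follows.

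The only step requiring care is the Poincar\'e-trace inequality itself, and this is standard rather than a genuine obstacle: it follows from a Rellich compactness argument by contradiction. If the inequality failed there would be a sequence $v_n$ with $\|v_n\|_{L^2(\Omega)} = 1$ while $\|\nabla v_n\|_{L^2(\Omega)}^2 + \|v_n|_{\partial \Omega}\|_{L^2(\partial \Omega)}^2 \to 0$; the resulting $H^1(\Omega)$ bound on $v_n$ together with the Rellich--Kondrachov theorem yields a subsequence converging in $L^2(\Omega)$ to a limit $v$ with $\nabla v = 0$ and vanishing trace, forcing $v$ to be a constant whose boundary value is zero, i.e. $v \equiv 0$, contradicting $\|v\|_{L^2(\Omega)}=1$. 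I would therefore simply invoke (or cite) this inequality and keep the remainder of the argument as the short substitution above.
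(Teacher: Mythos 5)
Your proof is correct, but it takes a genuinely different route from the paper's. You get the conclusion in one step from the generalized Poincar\'e (Friedrichs-type) inequality $\|v\|_{L^2(\Omega)}^2 \le C_\Omega(\|\nabla v\|_{L^2(\Omega)}^2 + \|v|_{\partial\Omega}\|_{L^2(\partial\Omega)}^2)$, which is indeed valid on a bounded Lipschitz domain and is correctly established by your Rellich compactness argument; substituting the hypotheses and absorbing the lower-order term $\varepsilon^2/\beta^2$ (legitimate since $M=\|u\|_{H^2(\Omega)}$ is a fixed positive constant and the estimate is asymptotic as $\varepsilon\to 0$) yields \eqref{l2u}. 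The paper instead proceeds in three steps: it interpolates the boundary trace between $L^2(\partial\Omega)$ and $H^{3/2}(\partial\Omega)$ to obtain the $H^{1/2}(\partial\Omega)$ bound $\|(u-u_N)|_{\partial\Omega}\|_{H^{1/2}(\partial\Omega)} \le c(M+\widetilde{M})^{1/3}\varepsilon^{2/3}/\beta^{2/3}$ in \eqref{j7}, then introduces the bounded lifting $\mathnormal{l}_{\Omega}: H^{1/2}(\partial\Omega)\to H^1(\Omega)$ and applies the standard zero-trace Poincar\'e inequality to $(u-u_N)-\mathnormal{l}_{\Omega}(u-u_N)$, finally recombining by the triangle inequality. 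Your argument is shorter and more elementary: one inequality replaces the interpolation-plus-lifting machinery, and it cleanly separates what the two hypotheses control. What the paper's detour buys is the intermediate estimate \eqref{j7} itself, which is not merely internal to the lemma: it is reused in the proof of Theorem \ref{thm2} (where $\|g_\varepsilon\|_{H^{1/2}(\partial\Omega)}$ is bounded via \eqref{j7}) to obtain the improved $O((M+\widetilde{M})^{1/3}\varepsilon^{2/3})$ rate in $L^2(\Omega)$. Your proof does not produce that byproduct, so if one adopted it, the $H^{1/2}$ trace interpolation would still have to be carried out separately later; as a proof of the lemma in isolation, however, yours is complete and arguably preferable.
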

\begin{proof}
			Since
		\begin{align*}
	\|u-u_N\|_{L^2(\Omega)}&\leq \|u\|_{L^2(\Omega)}+\|u_N\|_{L^2(\Omega)}\\ \nonumber
	&\leq \|u\|_{L^2(\Omega)}+\|u_N\|_{H^2(\Omega)},
	\end{align*}	
	and
	\begin{align*}
	\|u\|_{L^2(\Omega)}\leq \|u\|_{H^2(\Omega)}\leq M;\ \|u_N\|_{H^2(\Omega)}\leq \widetilde{M},
	\end{align*}
	we have
	\begin{align}\label{apr1}
	\|u-u_N\|_{L^2(\Omega)}\leq M+\widetilde{M}\ \text{and}\ 	\|u-u_N\|_{H^2(\Omega)}\leq M+\widetilde{M}.
	\end{align}
	Consider
	\begin{align}\label{j7}
	\| (u-u_N)|_{\partial \Omega}\|_{H^{1/2}({\partial \Omega})}&\leq \| (u-u_N)|_{\partial \Omega}\|_{L^{2}({\partial \Omega})}^{2/3} \| (u-u_N)|_{\partial \Omega}\|_{H^{3/2}({\partial \Omega})}^{1/3}\\ \nonumber
	&\leq c \frac{\varepsilon^{2/3}}{\beta^{2/3}}  \| (u-u_N)\|_{H^{2}({\Omega})}^{1/3}\\\nonumber
	&\leq c(M+\widetilde{M})^{1/3}  \frac{\varepsilon^{2/3}}{\beta^{2/3}}.
	\end{align}
	
	Consider the lifting operator $\mathnormal{l}_{\Omega}: H^{1/2}(\partial \Omega) \to H^1(\Omega),$ which is linear and bounded such that $Tr \mathnormal{l}_{\Omega} = I.$ Here, $Tr$ is the trace operator and $I$ is the identity operator.
	\begin{align*}
	\| \mathnormal{l}_{\Omega} (u-u_N)\|_{H^{1}({\Omega})}\leq C_{\mathnormal{l}_{\Omega}} \| (u-u_N)|_{\partial \Omega}\|_{H^{1/2}({\partial \Omega})}\leq c(M+\widetilde{M})^{1/3} \frac{\varepsilon^{2/3}}{\beta^{2/3}},
	\end{align*}
	where $C_{\mathnormal{l}_{\Omega}}$ is the constant from the lifting operator.
	Moreover
	\begin{align*}
	\| (u-u_N)-\mathnormal{l}_{\Omega} (u-u_N)\|_{L^{2}({\Omega})}&\leq c\| \nabla ((u-u_N)-\mathnormal{l}_{\Omega} (u-u_N))\|_{L^{2}({\Omega})}\\
	&\leq c\| \nabla (u-u_N)\|_{L^{2}({\Omega})}+c\| \nabla (\mathnormal{l}_{\Omega} (u-u_N))\|_{L^{2}({\Omega})}\\
	&\leq c \varepsilon^{1/2} (M+\widetilde{M})^{1/2} \left(\frac{1}{\alpha}+\frac{1}{\beta}\right)^{1/2}+c\| \mathnormal{l}_{\Omega} (u-u_N)\|_{H^{1}({\Omega})}\\
	&\leq c \varepsilon^{1/2} (M+\widetilde{M})^{1/2} \left(\frac{1}{\alpha}+\frac{1}{\beta}\right)^{1/2}+c(M+\widetilde{M})^{1/3} \frac{\varepsilon^{2/3}}{\beta^{2/3}}.
	\end{align*}
	Therefore
	\begin{align*}
	\|u-u_N\|_{L^{2}({\Omega})}&\leq \|(u-u_N)-\mathnormal{l}_{\Omega} (u-u_N)+\mathnormal{l}_{\Omega} (u-u_N)\|_{L^{2}({\Omega})}\\ \nonumber
	&\leq \| (u-u_N)-\mathnormal{l}_{\Omega} (u-u_N)\|_{L^{2}({\Omega})}+\| \mathnormal{l}_{\Omega} (u-u_N)\|_{L^{2}({\Omega})}\\ \nonumber
	&\leq c \varepsilon^{1/2}(M+\widetilde{M})^{1/2}  \left(\frac{1}{\alpha}+\frac{1}{\beta}\right)^{1/2}+c(M+\widetilde{M})^{1/3} \frac{\varepsilon^{2/3}}{\beta^{2/3}}\\ \nonumber
	&=O((M+\widetilde{M})^{1/2} \varepsilon^{1/2}).
	\end{align*}	
	\end{proof}
	\textbf{Proof of Theorem \ref{thm2}:}
	\begin{proof}
		First, we have
		\begin{equation}\label{j1}
			\| (u-u_N)|_{\partial \Omega}\|_{L^{2}(\partial \Omega)}^2=\| u_N|_{\partial \Omega}\|_{L^{2}(\partial \Omega)}^2\leq\frac{\varepsilon^2}{\beta^2}.
		\end{equation}
		
		Moreover
		\begin{align}\label{j2}
			&\| \nabla (u-u_N)\|_{L^{2}(\Omega)}^2\\ \nonumber
			&=\int_{\Omega} (\nabla (u-u_N))^2 dx\\ \nonumber
			&=\int_{\partial \Omega}\nabla (u-u_N)|_{\partial \Omega} \cdot (u-u_N)|_{\partial \Omega}dx-\int_{\Omega}  (u-u_N)\cdot  (\Delta (u-u_N))dx\\ \nonumber
			&\leq \|\nabla (u-u_N)|_{\partial \Omega}\|_{L^2(\partial \Omega)}\cdot  \|(u-u_N)|_{\partial \Omega}\|_{L^2(\partial \Omega)}+\|u-u_N\|_{L^2(\Omega)}\cdot  \|\Delta (u-u_N)\|_{L^2(\Omega)}.
		\end{align}	
		
		Since
		\begin{align*}
			\|\Delta (u-u_N)\|_{L^2(\Omega)}\leq c\|\mathscr{L}u_N-f\|_{L^2(\Omega)}\leq c\frac{\varepsilon}{\alpha},
		\end{align*}	
		\begin{align*}
			\|u-u_N\|_{L^2(\Omega)}&\leq \|u\|_{L^2(\Omega)}+\|u_N\|_{L^2(\Omega)}\\ \nonumber
			&\leq \|u\|_{L^2(\Omega)}+\|u_N\|_{H^2(\Omega)},
		\end{align*}	
		and
		\begin{align*}
			\|u\|_{L^2(\Omega)}\leq \|u\|_{H^2(\Omega)}\leq M;\ \|u_N\|_{H^2(\Omega)}\leq \widetilde{M},
		\end{align*}
		we have
		\begin{align*}
			\|u-u_N\|_{L^2(\Omega)}\leq M+\widetilde{M}.
		\end{align*}
		Moreover
		\begin{align*}
			\|(u-u_N)|_{\partial \Omega}\|_{L^2(\partial \Omega)}\leq \frac{\varepsilon}{\beta},
		\end{align*}
		and
		\begin{align*}
			\|\nabla (u-u_N)|_{\partial \Omega}\|_{L^2(\partial \Omega)}&\leq c\|u-u_N\|_{H^2(\Omega)}\\ \nonumber
			&\leq c(\|u\|_{H^2(\Omega)}+\|u_N\|_{H^2(\Omega)}) <c(M+\widetilde{M}).
		\end{align*}
		Therefore, we have
		\begin{align}\label{grau}
			\| \nabla (u-u_N)\|_{L^{2}(\Omega)}^2\leq c \varepsilon (M+\widetilde{M})\left(\frac{1}{\alpha}+\frac{1}{\beta}\right).
		\end{align}
		Therefore, we can apply Lemma \ref{aprlem} and obtain (\ref{l2u}).
		Combining (\ref{grau}) and (\ref{l2u}), we have
		\begin{align*}
			\|u-u_N\|_{H^1(\Omega)}= \|u-u_N\|_{L^{2}({\Omega})}+\| \nabla (u-u_N)\|_{L^{2}(\Omega)}=O((M+\widetilde{M})^{1/2} \varepsilon^{1/2}).
		\end{align*}
		%
		
		We can improve the rate of $\|u-u_N\|_{L^{2}({\Omega})}$ by using a different approach:\\
		Denote $\mathscr{L}u_N = f_{\varepsilon}$ and $u_N|_{\partial \Omega} = Tr(u_N) = g_{\varepsilon}$. From \eqref{j8}, we have $\displaystyle \|f-f_{\varepsilon}\|_{L^2(\Omega)}\leq \frac{\varepsilon}{\alpha}$, from \eqref{j7}, we have $\displaystyle \|g_{\varepsilon}\|_{H^{1/2}(\partial \Omega)}< c(M+\widetilde{M})^{1/3}\frac{\varepsilon^{2/3}}{\beta^{2/3}}$. Let $\tilde{u}_N = u_N-\mathnormal{l}_{\Omega}g_{\varepsilon}$. Then
		
		
		\begin{equation}\label{a8}
			\begin{split}
				\mathscr{L}\tilde{u}_{N} = f_{\varepsilon} - \mathscr{L}\mathnormal{l}_{\Omega}g_{\varepsilon},\\
				\tilde{u}_{N}|_{\partial \Omega} = 0.
			\end{split}
		\end{equation}
		Note that since $\mathscr{L}$ is a second order elliptic operator and $\mathnormal{l}_{\Omega}g_{\varepsilon} \in H^{1}(\Omega)$, we have $\mathscr{L}\mathnormal{l}_{\Omega}g_{\varepsilon} \in H^{-1}(\Omega)$. From Lax-Milgram \cite{Evans}, we have
		
		\begin{equation}\label{a9}
			\|u-\tilde{u}_N\|_{H^1(\Omega)} \leq  C_{\mathscr{L}^{-1}}\| (f-f_{\varepsilon})+\mathscr{L}\mathnormal{l}_{\Omega}g_{\varepsilon} \|_{H^{-1}(\Omega)}.
		\end{equation}
		Therefore, we have
		\begin{equation}\label{a10}
			\begin{split}
				&\|u-u_N\|_{H^1(\Omega)}\\
				&= \| u -(u_N-\mathnormal{l}_{\Omega}g_{\varepsilon}) -\mathnormal{l}_{\Omega}g_{\varepsilon} \|_{H^1(\Omega)}\\
				&\leq \|u-\tilde{u}_N\|_{H^1(\Omega)} + \|\mathnormal{l}_{\Omega}g_{\varepsilon} \|_{H^1(\Omega)}\\
				&\leq C_{\mathscr{L}^{-1}}\| (f-f_{\varepsilon}) + \mathscr{L}\mathnormal{l}_{\Omega}g_{\varepsilon}) \|_{H^{-1}(\Omega)} + \|\mathnormal{l}_{\Omega}\|\|g_{\varepsilon}\|_{H^{1/2}(\partial \Omega)}.
			\end{split}
		\end{equation}
		Since
		$$\|f-f_{\varepsilon}\|_{H^{-1}(\Omega)} \leq c \|f-f_{\varepsilon}\|_{L^2(\Omega)}$$
		and
		$$\|\mathscr{L}\mathnormal{l}_{\Omega}g_{\varepsilon}\|_{H^{-1}(\Omega)} \leq C_{\mathscr{L}}\|\mathnormal{l}_{\Omega}g_{\varepsilon} \|_{H^1(\Omega)}\leq C_{\mathscr{L}}\|\mathnormal{l}_{\Omega}\|\|g_{\varepsilon}\|_{H^{1/2}(\partial \Omega)}.$$
		Thus
		\begin{equation}\label{a11}
			\begin{split}
				\|u-u_N\|_{L^2(\Omega)} & \leq \| u - u_N\|_{H^1(\Omega)} \\
				& \leq c\|f-f_{\varepsilon}\|_{L^2(\Omega)} + c \|g_{\varepsilon}\|_{H^{1/2}(\partial \Omega)}.
			\end{split}
		\end{equation}
		Therefore, we have
		\begin{equation}\label{a12}
			\|u-u_N\|_{L^2(\Omega)} \leq \frac{c}{\alpha} \varepsilon + \frac{c(M+\widetilde{M})^{1/3}}{\beta^{2/3}}\varepsilon^{2/3} =: O((M+\widetilde{M})^{1/3}\varepsilon^{2/3}).
		\end{equation}
	\end{proof}
	We will show below that by considering the loss function to be
	$$\alpha^2 \|\mathscr{L}u_N-f\|_{L^2(\Omega)}^2 + \beta^2\| u_N|_{\partial \Omega}\|_{H^{1/2}(\partial \Omega)}^2,$$
	we have an improved result on $\|u-u_N\|_{L^2(\Omega)}$ which is stated in Theorem \ref{thmay1}.\\[5pt]

\textbf{Proof of Theorem \ref{thmay1}:}
	\begin{proof}
		Same as before, $\mathscr{L}u_N = f_{\varepsilon}$ and $u_N|_{\partial \Omega} = Tr(u_N) = g_{\varepsilon}$. From \eqref{j12}, we have $\displaystyle \|f-f_{\varepsilon}\|_{L^2(\Omega)}\leq \frac{\varepsilon}{\alpha}$,
		and $\displaystyle \|g_{\varepsilon}\|_{H^{1/2}(\partial \Omega)}\leq\frac{\varepsilon}{\beta}$.
		Let $\tilde{u}_N = u_N-\mathnormal{l}_{\Omega}g_{\varepsilon}$.
		%

		
		\begin{equation}\label{a8n}
			\begin{split}
				\mathscr{L}\tilde{u}_{N} = f_{\varepsilon} - \mathscr{L}\mathnormal{l}_{\Omega}g_{\varepsilon},\\
				\tilde{u}_{N}|_{\partial \Omega} = 0.
			\end{split}
		\end{equation}
		%
		Similar to the proof of Theorem \ref{thm2}, we have
		\begin{equation*}
			\begin{split}
				\|u-u_N\|_{H^1(\Omega)}
				\leq C_{\mathscr{L}^{-1}}\| (f-f_{\varepsilon}) + \mathscr{L}\mathnormal{l}_{\Omega}g_{\varepsilon}) \|_{H^{-1}(\Omega)} + \|\mathnormal{l}_{\Omega}\|\|g_{\varepsilon}\|_{H^{1/2}(\partial \Omega)},
			\end{split}
		\end{equation*}
		and
		\begin{equation*}
			\begin{split}
				\|u-u_N\|_{L^2(\Omega)} & \leq \| u - u_N\|_{H^1(\Omega)} \\
				& \leq c\|f-f_{\varepsilon}\|_{L^2(\Omega)} + c \|g_{\varepsilon}\|_{H^{1/2}(\partial \Omega)}.
			\end{split}
		\end{equation*}
		Therefore, we have
		\begin{equation*}
			\|u-u_N\|_{L^2(\Omega)} \leq \frac{c}{\alpha} \varepsilon + \frac{c}{\beta}\varepsilon = O(\varepsilon).
		\end{equation*}
	\end{proof}

	\section{Navier-Stokes Equations: Proof of Main Theorems}\label{section:navier-stokes}
	
	\subsection{Functional analytic framework.}
	Let $\Omega$ be a bounded domain in $\R^2$ and
	$$H := \left \{u\in L^2(\Omega),\ \nabla \cdot u \in L^2(\Omega),\ \nabla \cdot u=0\right \},$$
	$$V := \left \{u\in H^1_0(\Omega),\ \nabla \cdot u=0\right \}.$$
	$V'$ is the dual space of $V$.\\
	Let $\mathbb{P}$ be the Leray Projection which is an orthogonal projection from $L^2$ onto the subset of $L^2$ consisting of those functions whose weak derivatives are divergence-free in the $L^2$ sense. $A$ is the Stokes operator, defined as
	$\displaystyle
	\label{defA}
	A=-\mathbb{P} \Delta.
	$ $B$ is the bilinear form defined by
	$\displaystyle
	\label{defB}
	B(u,u)=\mathbb{P} \left [({u} \cdot \nabla) {u}\right ].
	$
	
	Applying the projection $\mathbb{P}$ on (\ref{c2}),  the functional form of the NSE can be written as
	\begin{align}
		\label{functional_form}
		&\frac{du}{dt}+\mathit{A}u+\mathit{B}(u,u)=\mathbb{P}f,\\ \nonumber
		&u|_{\partial \Omega}= 0.
	\end{align}
	
	We recall the definition of strong solutions from \cite{temam1995navier}:\\
	Let $\displaystyle W=\left \{u\in H^1_{loc}(\Omega)\ \text{and}\ \nabla \cdot {u}=0\ \text{in}\ \Omega\right \}$ and $u_0\in W$, $u$ is a strong solution of NSE if it solves the variational formulation of (\ref{c2}) as in \cite{CF, temam1995navier}, and
	$$u\in L^2(0, T; D(A))\cap L^{\infty}(0, T; W),$$
	for $T>0$.
	
	\subsection{Hodge decomposition.}
	The idea of Hodge decomposition is to decompose a vector 
	$u \in L^2(\Omega)$ uniquely into a divergence-free part $u_1$ and an irrotational part $u_2$, which is orthogonal in $L^2(\Omega)$ to $u_1$:
	$$u=u_1+u_2,\ \nabla \cdot u_1=0\ \mbox{and}\ (u_1,u_2)=0.$$
	When we apply the Leray Projection $\mathbb{P}$ on $u$, we have
	$$\mathbb{P} u=u_1.$$
	More precisely, we have the following proposition, the proof of which can be found in   \cite{CF}.
	\begin{proposition}
	Let $\Omega$ be open, bounded, connected with boundary of class $C^2$. Then $L^2(\Omega)= H\oplus H_1 \oplus H_2$, where $H, H_1, H_2$ are mutually orthogonal spaces and moreover
	\[
	H_1= \{u \in L^2(\Omega)| u = \nabla p, p \in H^1(\Omega), \Delta p=0\},
	\]
	and
	\[
	H_2=\{u \in L^2(\Omega)| u = \nabla p, p \in H_0^1(\Omega)\}.
	\]
	\end{proposition}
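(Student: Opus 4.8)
The plan is to establish the two assertions of the proposition separately: first the mutual orthogonality of the three subspaces, which follows directly from Green's identities, and then the completeness $L^2(\Omega)=H\oplus H_1\oplus H_2$, which I would obtain constructively by peeling off the two gradient components via elliptic solvability results. Throughout, $H$ is understood in the standard Navier--Stokes sense as the divergence-free fields with vanishing normal trace, $H=\{v\in L^2(\Omega): \nabla\cdot v=0,\ v\cdot n|_{\partial\Omega}=0\}$, so that the generalized Gauss--Green formula $\int_\Omega v\cdot\nabla p = -\int_\Omega(\nabla\cdot v)\,p + \langle v\cdot n, p\rangle_{\partial\Omega}$ is available for $v$ with $\nabla\cdot v\in L^2$, the normal trace being read in $H^{-1/2}(\partial\Omega)$.

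\textbf{Orthogonality.} For $v\in H$ and $\nabla p\in H_1\cup H_2$, the Gauss--Green formula gives $\int_\Omega v\cdot\nabla p=0$, since $\nabla\cdot v=0$ together with either $v\cdot n=0$ (when $p\in H^1$ is harmonic) or $p|_{\partial\Omega}=0$ (when $p\in H^1_0$) annihilates both terms; thus $H\perp H_1$ and $H\perp H_2$. For $\nabla p_1\in H_1$ and $\nabla p_2\in H_2$, integrating by parts once more yields $\int_\Omega\nabla p_1\cdot\nabla p_2=-\int_\Omega p_2\,\Delta p_1 + \langle\partial_n p_1,p_2\rangle_{\partial\Omega}=0$, because $\Delta p_1=0$ and $p_2\in H^1_0$; hence $H_1\perp H_2$.

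\textbf{Completeness.} Given $u\in L^2(\Omega)$, I would first extract the $H_2$-component by solving, via Lax--Milgram on $H^1_0(\Omega)$ (coercivity from Poincar\'e's inequality), for $p_2\in H^1_0(\Omega)$ with $\int_\Omega\nabla p_2\cdot\nabla\phi=\int_\Omega u\cdot\nabla\phi$ for all $\phi\in H^1_0(\Omega)$; this is the weak form of $\Delta p_2=\nabla\cdot u$. Setting $w:=u-\nabla p_2$, the identity forces $\int_\Omega w\cdot\nabla\phi=0$ for all $\phi\in H^1_0$, i.e. $\nabla\cdot w=0$ in $L^2(\Omega)$. Next I extract the $H_1$-component by solving, via Lax--Milgram on the quotient $H^1(\Omega)/\mathbb{R}$ (coercivity from the Poincar\'e--Wirtinger inequality, which uses that $\Omega$ is connected), for $p_1\in H^1(\Omega)/\mathbb{R}$ with $\int_\Omega\nabla p_1\cdot\nabla\phi=\int_\Omega w\cdot\nabla\phi$ for all $\phi\in H^1(\Omega)$; the right-hand side functional vanishes on constants, which is exactly the compatibility condition. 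Testing against $\phi\in H^1_0$ shows $\Delta p_1=0$, so $p_1$ is harmonic and $\nabla p_1\in H_1$; and setting $v:=w-\nabla p_1$ gives $\int_\Omega v\cdot\nabla\phi=0$ for every $\phi\in H^1(\Omega)$, which encodes simultaneously $\nabla\cdot v=0$ and $v\cdot n|_{\partial\Omega}=0$, so $v\in H$. This produces $u=v+\nabla p_1+\nabla p_2$, and the orthogonality already shown makes the sum direct and the components unique.

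\textbf{Main obstacle.} The delicate point I anticipate is the treatment of the normal trace of an $L^2$ divergence-free field: the statement $\int_\Omega v\cdot\nabla\phi=0$ for all $\phi\in H^1(\Omega)$ must be shown equivalent to the conjunction $\nabla\cdot v=0$ and $v\cdot n|_{\partial\Omega}=0$, which requires density of $C^\infty(\overline\Omega)$ in $H^1(\Omega)$ and the generalized Gauss--Green theorem on a $C^2$ domain, together with the identification of $v\cdot n$ as an element of $H^{-1/2}(\partial\Omega)$. The remaining ingredients --- the compatibility $\int_{\partial\Omega} w\cdot n=\int_\Omega\nabla\cdot w=0$ for the Neumann problem and the two coercivity estimates --- are routine but rest squarely on the connectedness and $C^2$-regularity of $\partial\Omega$ assumed in the statement.
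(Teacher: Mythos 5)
Your proof is correct and follows essentially the same route the paper takes (the paper itself only sketches the construction and defers the details to Constantin--Foias): first peel off the $H_2$-component by solving the Dirichlet problem $\Delta p_2=\nabla\cdot u$, $p_2\in H^1_0(\Omega)$, then the $H_1$-component via the Neumann problem for a harmonic $p_1$, with the remainder landing in $H$. Your Lax--Milgram formulations and Green's-identity orthogonality arguments correctly supply the details the paper omits, and your reading of $H$ as the divergence-free fields with vanishing normal trace is the standard interpretation required for the decomposition to be orthogonal and direct (the paper's displayed definition of $H$ omits the normal-trace condition, without which the statement would fail).
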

	The decomposition above is obtained as follows. Let $v \in L^2(\Omega)$. Then, 
	\[
	v=u + u_1+u_2,\ u \in H,\ \mbox{and}\ 
	u_2= \nabla p_2, \ 
	\Delta p_2=\nabla \cdot v \in H^{-1}(\Omega),\ p_2 \in H_0^1(\Omega).
	\]
	Subsequently, $u_1$ is obtained by solving the Neumann problem
	\[
	u_1= \nabla p_1,\ \Delta p_1=0, \ 
	\frac{\partial p_1}{\partial n_\Omega}=\gamma(v-u_2),
	\]
	where $n_\Omega$ is the unit normal vector on the boundary of $\Omega$ and $\gamma$ denotes the normal trace on the boundary (see \cite{CF, Temam} for more details).
	
	\subsection{Proofs.}
	Consider an approximate solution 
	$\tilde{u}_N \in \mathscr{F}_N$, i.e. $\tilde{u}_N$ satisfies 
	\eqref{cn1} and denote $\tilde{u}_N |_{\partial \Omega} =\tilde{g},\ \nabla \cdot \tilde{u}_N = \tilde{h}.$ Let
	$\displaystyle
	\tilde{f} := \partial_t \tilde{u}_N - \Delta \tilde{u}_N + \tilde{u}_N  \cdot \nabla \tilde{u}_N + \nabla \tilde{p}_N - f.
	$
	Then
	\begin{equation}\label{cn4}
		\begin{split}
			\partial_t \tilde{u}_N - \Delta \tilde{u}_N + \tilde{u}_N \cdot \nabla \tilde{u}_N + \nabla \tilde{p}_N &= f + \tilde{f}, \\
			\nabla \cdot \tilde{u}_N &= \tilde{h}, \\
			\tilde{u}_N|_{\partial \Omega} &= \tilde{g}.
		\end{split}
	\end{equation}
	Applying the Hodge decomposition on $\tilde{u}_N$:
	\begin{equation}\label{c5nn}
		\tilde{u}_N = \mathbb{P} \tilde{u}_N + (\mathbb{I} - \mathbb{P})\tilde{u}_N =: u_N + v_N,
	\end{equation}
	where $u_N=\mathbb{P} \tilde{u}_N$, $\nabla \cdot u_N=0,$ $u_N|_{\partial \Omega}=0,$ and $v_N=(\mathbb{I} - \mathbb{P})\tilde{u}_N$.
	
	Before we prove our main theorems, we first introduce two Lemmas.
	
	\begin{lemma}\label{lemma1}
		Consider $u_N$ satisfying
		\begin{align}\label{c15}
			&\frac{du_N}{dt}  + A u_N + B(u_N, u_N) = \mathbb{P}f + \varphi,\\ \nonumber
			& u_N|_{\partial \Omega} =0.
		\end{align}
		where
		\begin{equation}\label{c16}
			\int_0^T \|\varphi\|^2_{V'} \, dt \leq O \left(\varepsilon+\frac{\varepsilon^2}{\sqrt{\lambda}}\right),
		\end{equation}
		and let  $u$ be  a strong solution of (\ref{functional_form}), with $\displaystyle \|{u}_N (x, 0) - u_0(x)\|_{L^2}^2\leq {\varepsilon}^2.$
		
		Then \begin{align*}
			\sup_{[0, T]}\|u(x, t)-u_N(x, t)\|_{L^2(\Omega)}^2\leq O\left(\varepsilon+\frac{\varepsilon^2}{\sqrt{\lambda}}\right).
		\end{align*}
	\end{lemma}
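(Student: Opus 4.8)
The plan is to run a standard energy--stability argument on the difference $w := u - u_N$, exploiting the two-dimensional structure of the nonlinearity together with Gr\"onwall's inequality. Subtracting \eqref{functional_form} from \eqref{c15} and writing the nonlinear difference in the form $B(u,u) - B(u_N,u_N) = B(w,u) + B(u_N,w)$, I obtain
\begin{equation*}
\frac{dw}{dt} + Aw + B(w,u) + B(u_N,w) = -\varphi, \qquad w|_{\partial\Omega} = 0.
\end{equation*}
Both $u$ (a strong solution) and $u_N = \mathbb{P}\tilde u_N$ lie in $V$, so $w \in V$ and the usual orthogonality property of the trilinear form, $(B(v,\phi),\phi) = 0$ for $v,\phi \in V$, applies.

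Next I would pair this equation with $w$ in $H = L^2(\Omega)$. The term $(Aw,w)$ produces the dissipation $\|\nabla w\|_{L^2}^2$, while $(B(u_N,w),w) = 0$ by the orthogonality above, leaving only $(B(w,u),w)$ and the forcing. For the trilinear term I would use the two-dimensional Ladyzhenskaya inequality $\|\phi\|_{L^4}^2 \le C\|\phi\|_{L^2}\|\nabla\phi\|_{L^2}$ together with H\"older to bound
\begin{equation*}
|(B(w,u),w)| \le C\|w\|_{L^2}\|\nabla w\|_{L^2}\|\nabla u\|_{L^2},
\end{equation*}
and for the source I would use the duality pairing $|(\varphi,w)| \le \|\varphi\|_{V'}\|\nabla w\|_{L^2}$. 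Applying Young's inequality to both terms and absorbing the resulting $\|\nabla w\|_{L^2}^2$ contributions into the dissipation yields the differential inequality
\begin{equation*}
\frac{d}{dt}\|w\|_{L^2}^2 \le C\|\nabla u\|_{L^2}^2\,\|w\|_{L^2}^2 + C\|\varphi\|_{V'}^2.
\end{equation*}

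Finally I would invoke Gr\"onwall's inequality. The integrating factor is $\exp\!\big(C\int_0^T \|\nabla u\|_{L^2}^2\,dt\big)$, which is a finite constant depending only on $T$ and the strong solution, since $u \in L^\infty(0,T;W)$ guarantees $\int_0^T\|\nabla u\|_{L^2}^2\,dt < \infty$; this is precisely where the strong-solution hypothesis enters. Combining the integrating factor with the initial bound $\|w(0)\|_{L^2}^2 \le \varepsilon^2$ and the hypothesis \eqref{c16} on $\int_0^T\|\varphi\|_{V'}^2\,dt$ gives $\sup_{[0,T]}\|w\|_{L^2}^2 \le O(\varepsilon + \varepsilon^2/\sqrt\lambda)$, as claimed (using $\varepsilon^2 = O(\varepsilon)$). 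I expect the main obstacle to be the control of the nonlinear term $(B(w,u),w)$: it is only the dimension-two Ladyzhenskaya estimate that allows this cubic quantity to be split so that one factor of $\|\nabla w\|_{L^2}$ is absorbed by the viscous dissipation while the remaining coefficient $\|\nabla u\|_{L^2}^2$ is time-integrable. In three dimensions this closure would fail without additional smallness or higher regularity, which is the reason the stability statement is confined to the interval of existence of a strong solution.
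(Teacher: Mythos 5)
Your proposal is correct and follows essentially the same route as the paper: an energy estimate for $w = u - u_N$, the orthogonality $(B(v,\phi),\phi)=0$ reducing the nonlinearity to $(B(w,u),w)$, the 2D Ladyzhenskaya bound absorbed into the dissipation via Young, and Gr\"onwall with the initial bound and hypothesis \eqref{c16}. The only cosmetic differences are the algebraic splitting of $B(u,u)-B(u_N,u_N)$ (the paper writes it as $B(u,w)+B(w,u)-B(w,w)$, you as $B(w,u)+B(u_N,w)$, which yield the same surviving term) and that the paper bounds the Gr\"onwall integrating factor explicitly via the Grashof number, where you simply invoke $\int_0^T\|\nabla u\|_{L^2}^2\,dt<\infty$ for a strong solution.
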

	\begin{proof}
		Considering $w(t)=u(t)-u_N(t)$, from (\ref{functional_form}) and (\ref{c15}), we have
		\begin{equation}\label{c18}
			\begin{split}
				\frac{dw}{dt} + Aw + B(u,u)- B(u_N, u_N) &=-\varphi,\\
				w|_{\partial \Omega}= 0.
			\end{split}
		\end{equation}
		
		Since
		\begin{align*}
			B(u,u)- B(u_N, u_N)=B(u,w)+ B(w, u_N)=B(u,w)+B(w, u)-B(w, w),
		\end{align*}
		
		we have
		\begin{equation}\label{c19}
			\frac{dw}{dt} + Aw + B(u,w)+B(w, u)-B(w, w)=-\varphi.
		\end{equation}
		
		Taking inner product of (\ref{c19}) with $w$, we obtain
		\begin{align*}
			\frac{1}{2}\frac{d}{dt}\|w\|_{L^2}^2+\|A^{1/2}w\|_{L^2}^2+(B(w,u),w)=-(\varphi,w).
		\end{align*}
		
		Therefore
		\begin{align*}
			\frac{1}{2}\frac{d}{dt}\|w\|_{L^2}^2+\|A^{1/2}w\|_{L^2}^2\leq |(B(w,u),w)|+|(\varphi,w)|.
		\end{align*}
		
		Since
		\begin{align*}
			\left |(B(w,u),w)\right |\leq c\|A^{1/2}u\|_{L^2} \|w\|_{L^2} \|A^{1/2}w\|_{L^2}\leq c\frac{\|A^{1/2}u\|_{L^2}^2 \|w\|_{L^2}^2}{2}+\frac{\|A^{1/2}w\|_{L^2}^2}{2},
		\end{align*}
		and
		\begin{align*}
			|(\varphi,w)|\leq  \|\varphi\|_{V'}\|A^{1/2}w\|_{L^2}\leq \frac{\|\varphi\|_{V'}^2}{2}+\frac{\|A^{1/2}w\|_{L^2}^2}{2},
		\end{align*}
		we have
		\begin{align*}
			\frac{1}{2}\frac{d}{dt}\|w\|_{L^2}^2-c\frac{\|A^{1/2}u\|_{L^2}^2 \|w\|_{L^2}^2}{2}\leq \frac{\|\varphi\|_{V'}^2}{2}.
		\end{align*}
		
		Equivalently
		\begin{align*}
			\frac{d}{dt}\|w\|_{L^2}^2-c\|A^{1/2}u\|_{L^2}^2 \|w\|_{L^2}^2\leq \|\varphi\|_{V'}^2.
		\end{align*}
		
		Applying Gronwall's inequality, we have
		\begin{align*}
			\|w(t)\|_{L^2}^2\leq e^{\int^t_0 c\|A^{1/2}u\|_{L^2}^2 ds}\|w(0)\|_{L^2}^2+e^{\int^t_0 c\|A^{1/2}u\|_{L^2}^2 ds}\int^t_0 e^{-\int^s_0 c\|A^{1/2}u\|_{L^2}^2 d\tau}\|\varphi\|_{V'}^2ds.
		\end{align*}
		
		Since
		$$\int^t_0 c\|A^{1/2}u\|_{L^2}^2 ds\leq cF(G)t,$$
		where, $G$ is the Grashof number (defined as $G=\|f\|$ \cite{foias2002statistical}) and $F(G)$ is a function of $G$, we have
		\begin{align*}
			e^{\int^t_0 c\|A^{1/2}u\|_{L^2}^2 ds}\leq e^{cF(G)t}.
		\end{align*}
		
		Moreover
		\begin{align*}
			e^{-\int^s_0 c\|A^{1/2}u\|_{L^2}^2 d\tau}\leq 1,
		\end{align*}
		and
		\begin{align*}
			\|w(0)\|_{L^2}^2= \|{u}_N (x, 0) - u_0(x)\|_{L^2}^2\leq {\varepsilon}^2.
		\end{align*}
		
		We have
		\begin{align*}
			\|w(t)\|_{L^2}^2&\leq e^{cF(G)t}\|w(0)\|_{L^2}^2+e^{cF(G)t}\int^t_0 ||\varphi||_{V'}^2ds\\
			&\leq O\left(\varepsilon+\frac{\varepsilon^2}{\sqrt{\lambda}}\right) e^{cF(G)t}.
		\end{align*}
		
		Therefore
		\begin{align*}
			\sup_{[0, T]}\|w(t)\|_{L^2}^2\leq O\left(\varepsilon+\frac{\varepsilon^2}{\sqrt{\lambda}}\right).
		\end{align*}
	\end{proof}
	\begin{lemma}\label{lemma5.2}
		Assume
		\begin{align}\label{c2nn2}
			\|\tilde{u}_N|_{\partial \Omega}\|_{L^4([0, T]; H^{1/2}(\partial \Omega))}^4+
			\|\nabla \cdot \tilde{u}_N\|_{L^4([0, T]; L^2(\Omega))}^4
			\leq \varepsilon^2,
		\end{align}
		and with the Hodge decomposition (\ref{c5nn}), we have
		\begin{align}\label{c5nn1}
			\|v_N\|_{L^4([0, T]; H^1(\Omega))}\leq c{\sqrt{\varepsilon}}.
		\end{align}
	\end{lemma}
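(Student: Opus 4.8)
The plan is to exploit the \emph{explicit} Hodge decomposition recalled just after the Proposition, applied to $v = \tilde{u}_N$, and to estimate the two gradient pieces of $v_N$ separately by elliptic regularity, pointwise in time. Writing $\tilde{g} = \tilde{u}_N|_{\partial \Omega}$ and $\tilde{h} = \nabla \cdot \tilde{u}_N$, the construction gives $v_N = (\mathbb{I}-\mathbb{P})\tilde{u}_N = u_1 + u_2$, where $u_2 = \nabla p_2$ solves the Dirichlet problem $\Delta p_2 = \tilde{h}$ in $\Omega$ with $p_2 \in H^1_0(\Omega)$, and $u_1 = \nabla p_1$ solves the Neumann problem $\Delta p_1 = 0$ in $\Omega$, $\partial p_1/\partial n_\Omega = \gamma(\tilde{u}_N - u_2)$ on $\partial \Omega$. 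Since the target bound is in $L^4([0,T];H^1(\Omega))$, I would carry out every elliptic estimate for fixed $t$, with constants independent of $t$, and only integrate in time at the very end.

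For $u_2$ I would invoke $H^2$ elliptic regularity for the Dirichlet Laplacian: since $\tilde{h} = \nabla\cdot\tilde{u}_N \in L^2(\Omega)$, we get $\|p_2\|_{H^2(\Omega)} \le c\|\tilde{h}\|_{L^2(\Omega)}$, hence $\|u_2\|_{H^1(\Omega)} = \|\nabla p_2\|_{H^1(\Omega)} \le c\|\nabla\cdot\tilde{u}_N\|_{L^2(\Omega)}$. For $u_1$ I would use $H^2$ regularity for the Neumann Laplacian, which yields $\|u_1\|_{H^1(\Omega)} = \|\nabla p_1\|_{H^1(\Omega)} \le c\|\gamma(\tilde{u}_N - u_2)\|_{H^{1/2}(\partial \Omega)}$; here the harmonic $p_1$ is determined only up to an additive constant, which drops out upon taking the gradient, and the solvability (compatibility) condition $\int_{\partial \Omega}\gamma(\tilde{u}_N - u_2) = \int_\Omega(\tilde{h} - \Delta p_2) = 0$ holds automatically. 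I expect the main obstacle to be controlling the $H^{1/2}(\partial \Omega)$ norm of the Neumann datum $\gamma(\tilde{u}_N - u_2)$: I would bound the normal trace of $\tilde{u}_N$ by $\|\tilde{g}\|_{H^{1/2}(\partial \Omega)}$ (multiplication by the smooth normal $n_\Omega$ preserves $H^{1/2}$ on a $C^2$ boundary) and the normal trace of $u_2 = \nabla p_2$ via the trace theorem, $\|\partial p_2/\partial n_\Omega\|_{H^{1/2}(\partial \Omega)} \le c\|p_2\|_{H^2(\Omega)} \le c\|\tilde{h}\|_{L^2(\Omega)}$, giving $\|\gamma(\tilde{u}_N - u_2)\|_{H^{1/2}(\partial \Omega)} \le c(\|\tilde{g}\|_{H^{1/2}(\partial \Omega)} + \|\nabla\cdot\tilde{u}_N\|_{L^2(\Omega)})$.

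Combining the two pieces yields the pointwise-in-time estimate $\|v_N(t)\|_{H^1(\Omega)} \le c(\|\tilde{u}_N(t)|_{\partial \Omega}\|_{H^{1/2}(\partial \Omega)} + \|\nabla\cdot\tilde{u}_N(t)\|_{L^2(\Omega)})$. Finally I would take the $L^4$ norm in $t$ over $[0,T]$ and apply the triangle inequality to obtain $\|v_N\|_{L^4([0,T];H^1(\Omega))} \le c(\|\tilde{u}_N|_{\partial \Omega}\|_{L^4([0,T];H^{1/2}(\partial \Omega))} + \|\nabla\cdot\tilde{u}_N\|_{L^4([0,T];L^2(\Omega))})$. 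Since the hypothesis \eqref{c2nn2} bounds the sum of the fourth powers of these two norms by $\varepsilon^2$, each norm is at most $\varepsilon^{1/2}$, and therefore $\|v_N\|_{L^4([0,T];H^1(\Omega))} \le c\sqrt{\varepsilon}$, which is the claimed bound \eqref{c5nn1}.
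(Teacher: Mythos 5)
Your proposal is correct and follows essentially the same route as the paper: the same splitting $v_N = \nabla p_1 + \nabla p_2$ with the Dirichlet problem $\Delta p_2 = \nabla\cdot\tilde{u}_N$ solved first, the Neumann problem for the harmonic $p_1$ with datum $\gamma(\tilde{u}_N - \nabla p_2)$ solved second, $H^2$ elliptic regularity for both, and the trace theorem to control the Neumann datum. Your explicit verification of the Neumann compatibility condition and the pointwise-in-time organization of the estimates are minor refinements of, not departures from, the paper's argument.
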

	\begin{proof}
		Consider the Hodge decomposition
		\begin{equation*}
			\tilde{u}_N = u_N + v_N=u_N\oplus v_1 \oplus v_2
		\end{equation*}
		with
		\begin{equation*}
			v_N=v_1 \oplus v_2,
		\end{equation*}
		where $v_1=\nabla p_1$ and $v_2=\nabla p_2$, $p_1$ and $p_2$ are the solutions of the following two systems, respectively.
		\begin{equation}\label{2estp1}
			\left\{
			\begin{split}
				\Delta p_1&=0,\\
				\frac{\partial p_1}{\partial n}&=\gamma(\tilde{u}_N-v_2),
			\end{split}\right.
		\end{equation}
		and
		\begin{equation}\label{2estp2}
			\left\{
			\begin{split}
				\Delta p_2&=\nabla \cdot \tilde{u}_N,\\
				Tr(p_2)&=0.
			\end{split}\right.
		\end{equation}
		Here, $\gamma$ is the trace operator and $Tr(p_2)$ means the value of $p_2$ on the boundary.
		According to Lions \& Magenes \cite{lions2012non}, the above two systems have unique solutions (up to an additive constant). First, we solve for $p_2$ from (\ref{2estp2}). Accordingly, $v_2$ can be obtained. Then, we use $v_2$ to solve for $p_1$ from (\ref{2estp1}) and find $v_1$ afterwards.
		
		From (\ref{2estp2}) and (\ref{c2nn2}), we have
		\begin{align*}
			\|v _2\|_{L^4([0, T]; H^1(\Omega))}&\leq \| p_2\|_{L^4([0, T]; H^2(\Omega))}\\ \nonumber
			&\leq c\|\nabla \cdot \tilde{u}_N\|_{L^4([0, T]; L^2(\Omega))}\\ \nonumber
			&\leq c\varepsilon^{1/2}.
		\end{align*}
		
		From (\ref{2estp1}) and (\ref{c2nn2}),
		%
		\begin{align}
			\label{un1}
			\|v_1\|_{L^4([0, T]; H^1(\Omega))}&\leq \| p_1\|_{L^4([0, T]; H^2(\Omega))}\\ \nonumber
			&\leq  c\|\gamma(\tilde{u}_N-v_2)\|_{L^4([0, T]; H^{1/2}(\Omega))}\\ \nonumber
			&\leq c\| \tilde{u}_N|_{\partial \Omega}\|_{L^4([0, T]; H^{1/2}(\partial \Omega))}+c\|\gamma(v_2)\|_{L^4([0, T]; H^{1/2}(\Omega))}\\ \nonumber
			&\leq c\varepsilon^{1/2}+c\|Tr(v_2)\cdot n_{\Omega}\|_{L^4([0, T]; H^{1/2}(\Omega))}\\ \nonumber
			&\leq c\varepsilon^{1/2}+c\|v_2\|_{L^4([0, T]; H^1(\Omega))}\\ \nonumber
			&\leq c{\sqrt{\varepsilon}}.
		\end{align}

		Therefore,
		\begin{align*}
			\|v_N\|_{L^4([0, T]; H^1(\Omega))}=\|v_1\|_{L^4([0, T]; H^1(\Omega))}+\|v_2\|_{L^4([0, T]; H^1(\Omega))}\leq c{\sqrt{\varepsilon}}.
		\end{align*}
	\end{proof}
Note that (\ref{c5nn1}) also implies
\begin{align*}
\left(\int_0^T \|\nabla v_N\|_{L^2(\Omega)}^4 dt \right)^{1/4}\leq c{\sqrt{\varepsilon}},\
\left( \int_0^T \|v_N\|_{L^2(\Omega)}^4 dt \right)^{1/4}\leq c{\sqrt{\varepsilon}}.
	\end{align*}
\textbf{Proof of Theorem \ref{thmay3}}
	\begin{proof}
		Applying the Leray projection operator $\mathbb{P}$ to \eqref{cn4}, one obtains, under the assumption that $\mathbb{P}f = f$,
		\begin{equation*}
			\partial_t \mathbb{P} \tilde{u}_N - \mathbb{P} \Delta \tilde{u}_N + \mathbb{P} \tilde{u}_N \cdot \nabla \tilde{u}_N = f + \mathbb{P} \tilde{f}.
		\end{equation*}
		Recall that $\mathbb{P}$ is the orthogonal projection. By Hodge decomposition
		\begin{equation*}
			\tilde{u}_N = \mathbb{P} \tilde{u}_N + (\mathbb{I} - \mathbb{P})\tilde{u}_N =: u_N + v_N.
		\end{equation*}
		Then
		\begin{equation*}
			\partial_t u_N + A u_N + \mathbb{P} u_N \cdot \nabla u_N + \mathbb{P}(\tilde{u}_N \cdot \nabla \tilde{u}_N -u_N \cdot \nabla u_N) = f + \mathbb{P}\tilde{f}+\mathbb{P}\Delta v_N.
		\end{equation*}
		Here, $A$ is the Stokes' operator.
		\begin{equation}
			\begin{split}
				\mathbb{P}\left(\tilde{u}_N\cdot\nabla\tilde{u}_N - u_N\cdot\nabla u_N\right) &= \mathbb{P}\left((\tilde{u}_N -u_N)\cdot \nabla \tilde{u}_N + u_N \cdot \nabla(\tilde{u}_N -u_N)\right) \\
				& = \mathbb{P}\left(v_N\cdot \nabla \tilde{u}_N + u_N \cdot \nabla v_N\right) \\
				& =: \psi.
			\end{split}
		\end{equation}
		
		Next, we will estimate $\displaystyle \int_0^T \|\psi\|_{V'}^2 \, dt.$
		
		Note that $\displaystyle \|\psi\|_{V'} = \sup_{w \in V, \|w\|_V \leq 1}  \left \langle\psi, w\right \rangle$, where
		\begin{equation}\label{c8}
			\left \langle\psi, w\right \rangle = \int_{\Omega} \mathbb{P}(v_N \cdot \nabla \tilde{u}_N + u_N \cdot \nabla v_N) \cdot w \, dx.
		\end{equation}
		We estimate \eqref{c8} term by term. Since $w$ is divergence free, we have
		\begin{equation*}
			\begin{split}
			\int_{\Omega} \mathbb{P}(v_N \cdot \nabla \tilde{u}_N)\cdot w \, dx  &= \int_{\Omega}(v_N \cdot \nabla \tilde{u}_N)\cdot w \, dx \\
				&\leq \|\nabla \tilde{u}_N\|_{L^2(\Omega)}\|v_N\|_{L^4(\Omega)}\|w\|_{L^4(\Omega)}.
			\end{split}
		\end{equation*}
		By Sobolev inequality, $\|w\|_{L^4(\Omega)} \leq c\|w\|_{V}\leq c$ and thus
		
		\begin{equation}\label{c10}
			\begin{split}
				\|\mathbb{P}(v_N \cdot \nabla \tilde{u}_N)\|_{V'} &\leq c\|\nabla \tilde{u}_N\|_{L^2(\Omega)}\|v_N\|_{L^4(\Omega)}\\
				&\leq c\|\nabla \tilde{u}_N\|_{L^2(\Omega)}\|v_N\|_{L^2(\Omega)}^{1/2}\|\nabla v_N\|_{L^2(\Omega)}^{1/2},
			\end{split}
		\end{equation}
		where in the last line, we used the Ladyzhenskaya's inequality \cite{CF}.

		Therefore
		\begin{equation*}
			\begin{split}
				\int_0^T \|\mathbb{P}(v_N \cdot \nabla \tilde{u}_N)\|_{V'}^2 \, dt \leq c\int_0^T \|\nabla \tilde{u}_N\|_{L^2(\Omega)}^2\|v_N\|_{L^2(\Omega)}\|\nabla v_N\|_{L^2(\Omega)} \, dt \\
				\leq c\left(\int_0^T \|\nabla \tilde{u}_N\|_{L^2(\Omega)}^4 dt\right)^{1/2}\left (\int_0^T \|v_N\|_{L^2(\Omega)}^4 dt\right)^{1/4} \left(\int_0^T \|\nabla v_N\|_{L^2(\Omega)}^4 dt\right)^{1/4}.
			\end{split}
		\end{equation*}
		
		Since
		\begin{equation*}
			\lambda \| \tilde{u}_N\|_{L^4([0, T]; H^1(\Omega))}^4 \leq \varepsilon^2,
		\end{equation*}
		we have
		\begin{equation*}
			\int_0^T \|\nabla \tilde{u}_N\|_{L^2(\Omega)}^4 dt \leq \frac{\varepsilon^2}{\lambda}.
		\end{equation*}
		
		Therefore
		\begin{equation}
			\label{1est}
			\left(\int_0^T \|\nabla \tilde{u}_N\|_{L^2(\Omega)}^4 dt\right)^{1/2} \leq \frac{\varepsilon}{\sqrt{\lambda}}.
		\end{equation}
		
		Applying Lemma \ref{lemma5.2}, we have
		$$
		\left(\int_0^T \|\nabla v_N\|_{L^2(\Omega)}^4 dt \right)^{1/4}\leq c{\sqrt{\varepsilon}},\
		\left( \int_0^T \|v_N\|_{L^2(\Omega)}^4 dt \right)^{1/4}\leq c{\sqrt{\varepsilon}}.
		$$
		
		Therefore
		\begin{align}
			\label{1est3}
			\int_0^T \|\mathbb{P}(v_N \cdot \nabla \tilde{u}_N)\|_{V'}^2 \, dt\leq \frac{c\varepsilon^2}{\sqrt{\lambda}}.
		\end{align}
		
		Next, we estimate the second term of \eqref{c8}: $\displaystyle \int_{\Omega} \mathbb{P}(u_N \cdot \nabla v_N) \cdot w \, dx$.
		
		Similarly, we have
		\begin{equation*}
			\|\mathbb{P}(u_N \cdot \nabla v_N)\|_{V'} \leq \|\nabla v_N\|_{L^2(\Omega)}\|u_N\|_{L^4(\Omega)}\leq \|\nabla v_N\|_{L^2(\Omega)}\|u_N\|_{H^1(\Omega)}.
		\end{equation*}
		
		Therefore
		\begin{equation*}
			\begin{split}
				\int_0^T \|\mathbb{P}(u_N \cdot \nabla v_N)\|_{V'}^2 \, dt \leq \int_0^T \|\nabla v_N\|^2_{L^2(\Omega)}\|u_N\|^2_{H^1(\Omega)} \, dt \\
				\leq \left(\int_0^T \|\nabla v_N\|_{L^2(\Omega)}^4 dt\right)^{1/2}\left(\int_0^T \|u_N\|_{H^1(\Omega)}^4 dt\right)^{1/2}.
			\end{split}
		\end{equation*}
		
		Since
		\begin{align*}
			\left(\int_0^T \|\nabla v_N\|_{L^2(\Omega)}^4 dt \right)^{1/2}\leq c{{\varepsilon}},
		\end{align*}
		and
		\begin{equation*}
			\left(\int_0^T \|u_N\|_{H^1(\Omega)}^4 dt \right)^{1/2}\leq \left(\int_0^T \|\tilde{u}_N\|_{H^1(\Omega)}^4 dt\right)^{1/2}\leq  \frac{\varepsilon}{\sqrt{\lambda}}.
		\end{equation*}
		
		Therefore
		\begin{align}
			\label{2est3}
			\int_0^T \|\mathbb{P}(u_N \cdot \nabla v_N)\|_{V'}^2 \, dt\leq \frac{c\varepsilon^2}{\sqrt{\lambda}}.
		\end{align}
		
		Combining (\ref{1est3}) and (\ref{2est3}), we have
		\begin{equation}\label{c13}
			\int_0^T \|\psi\|_{V'}^2 \, dt \leq O \left(\frac{\varepsilon^2}{\sqrt{\lambda}}\right).
		\end{equation}
		%
		Moreover, since
		\begin{equation}\label{c14n}
			\|\mathbb{P}(\Delta v_N)\|_{V'}\leq \|\nabla v_N\|_{L^2(\Omega)}\|w\|_{H^1(\Omega)},
		\end{equation}
		we have
		\begin{align}\label{c14}
			\int_0^T \|\mathbb{P}(\Delta v_N)\|_{V'}^2 \, dt&\leq \int_0^T \|\nabla v_N\|^2_{L^2(\Omega)}\|w\|^2_{H^1(\Omega)} \, dt \\ \nonumber
			&\leq \left(\int_0^T \|\nabla v_N\|_{L^2(\Omega)}^4 dt\right)^{1/2}\left(\int_0^T \|w\|_{H^1(\Omega)}^4 dt\right)^{1/2}\\ \nonumber
			&\leq c{{\varepsilon}}.
		\end{align}

		Denoting $\varphi := \mathbb{P} \tilde{f}+ \mathbb{P}(\Delta v_N) - \psi$, we have
		$$\frac{du_N}{dt}  + A u_N + B(u_N, u_N) = \mathbb{P}f + \varphi.$$
		Since
		$$\int_0^T \|\mathbb{P}(\Delta v_N)\|_{V'}^2 \, dt\leq c\varepsilon,\ \int_0^T \|\mathbb{P} \tilde{f}\|_{V'}^2 \, dt\leq \int_0^T \|\mathbb{P} \tilde{f}\|_{L^2}^2 \, dt\leq \varepsilon^2,\ \int_0^T \|\psi\|_{V'}^2 \, dt \leq O \left(\frac{\varepsilon^2}{\sqrt{\lambda}}\right),$$
		we obtain
		$$\int_0^T \|\varphi\|^2_{V'} \, dt \leq O \left(\varepsilon+\frac{\varepsilon^2}{\sqrt{\lambda}}\right).$$
		
		Since $\displaystyle \|{u}_N (x, 0) - u_0(x)\|_{L^2}^2\leq \|{\tilde{u}}_N (x, 0) - u_0(x)\|_{L^2}^2\leq {\varepsilon}^2.$ Applying Lemma \ref{lemma1}, we have
		\begin{align*}
			\sup_{[0, T]}\|u(t)-u_N(t)\|_{L^2}^2\leq O \left(\varepsilon+\frac{\varepsilon^2}{\sqrt{\lambda}}\right).
		\end{align*}
		Moreover, since
		\begin{align*}
			\left( \int_0^T \|v_N\|_{L^2(\Omega)}^4 dt\right)^{1/4}\leq c{\sqrt{\varepsilon}},
		\end{align*}
		Therefore
		\begin{align*}
			\int_0^T \|u-\tilde{u}_N\|_{L^2(\Omega)}^4 dt&\leq \int_0^T \|u-{u}_N\|_{L^2(\Omega)}^4 dt+\int_0^T \|v_N\|_{L^2(\Omega)}^4 dt\\ \nonumber
			&\leq O \left(\varepsilon^2+\frac{\varepsilon^4}{{\lambda}}\right)+O \left(\varepsilon^2\right)=O \left(\varepsilon^2+\frac{\varepsilon^4}{{\lambda}}\right).
		\end{align*}
		So
		\begin{align*}
			\left(\int_0^T \|u-\tilde{u}_N\|_{L^2(\Omega)}^4 dt\right)^{1/4}\leq O \left(\varepsilon^{1/2}+\frac{\varepsilon}{{\lambda^{1/4}}}\right).
		\end{align*}
		%
		%
		%
	\end{proof}
	\begin{lemma} \label{lemforuN}
	Given $\varepsilon> 0$, assume that $(u, p)$ satisfies (\ref{c2}), then, there exists $(\tilde{u}_N, \tilde{p}_N) \in \mathscr{F}_N$ satisfying 
\begin{align} \label{condi1}
&\sup_{t\in [0,T]}\|u-\tilde{u}_N\|_{L^2 (\Omega)}\leq \varepsilon,\ \|u-\tilde{u}_N\|_{H^{1, 2} (\Omega\times [0, T] )}\leq \varepsilon,\\ \nonumber &\left(\int^T_0\|u-\tilde{u}_N\|^4_{W^{1,4}(\Omega)}dt\right)^{1/4}\leq \varepsilon,\ \|p-\tilde{p}_N\|_{L^2 ([0,T];H^1(\Omega))}\leq \varepsilon.
		 	\end{align}
		\end{lemma}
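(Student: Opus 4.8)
The plan is to produce $(\tilde u_N,\tilde p_N)$ directly from the universal approximation result, Theorem \ref{theorappr}, after first securing enough regularity of the strong solution $(u,p)$ to make that theorem applicable.

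First I would record the regularity of the strong solution. Since we are in two space dimensions and $u$ is a strong solution on $[0,T]$, standard parabolic bootstrapping for the NSE (under the assumed smoothness of $\Omega$, $f$, and $u_0$) upgrades $u$ so that, together with $\partial_t u$ and its spatial derivatives up to second order, it is continuous on the compact set $K:=\overline{\Omega}\times[0,T]$; the associated pressure $p$, recovered from $u$ through the divergence-gradient (Poisson) relation, is then $C^1$ in space on $K$. In particular, each scalar component of $u$ lies in $C^2(K)$ and $p\in C^1(K)$, viewing $(x,t)$ jointly as the $C^k$ variable.

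Next I would invoke Theorem \ref{theorappr} componentwise. For a prescribed tolerance $\delta>0$ and $k=2$, the theorem yields, for $N$ large enough, DNN functions approximating each component of $u$ and the pressure $p$ so that
\[
\norm{D^\alpha(u-\tilde u_N)}_{C(K)}\le\delta,\qquad
\norm{D^\alpha(p-\tilde p_N)}_{C(K)}\le\delta
\]
for all space-time multi-indices with $|\alpha|\le 2$ (respectively $|\alpha|\le 1$ for $p$). Assembling the finitely many scalar component networks keeps the result in $\mathscr{F}_N$ at the cost of enlarging the complexity $N$.

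Finally I would convert these uniform bounds into the four quantities appearing in \eqref{condi1}. Because $K$ has finite measure, each of $\sup_t\norm{\,\cdot\,}_{L^2(\Omega)}$, $\norm{\,\cdot\,}_{H^{1,2}(\Omega\times[0,T])}$, $\bigl(\int_0^T\norm{\,\cdot\,}_{W^{1,4}(\Omega)}^4\,dt\bigr)^{1/4}$, and $\norm{\,\cdot\,}_{L^2([0,T];H^1(\Omega))}$ is dominated by a constant $C=C(|\Omega|,T)$ times the corresponding $C^k$-norm of the difference; the $H^{1,2}$ and $W^{1,4}$ bounds use the second- and first-order derivative estimates just obtained, while the pressure bound uses the first-order estimate for $p$. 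Choosing $\delta=\varepsilon/C$ forces all four quantities below $\varepsilon$ simultaneously. The main obstacle is the regularity step: one must genuinely justify that the strong solution and its pressure are $C^2$/$C^1$ up to the spatial boundary and the initial time, since Theorem \ref{theorappr} requires a classically differentiable target on a compact set. This is precisely where the two-dimensionality (global-in-time existence of strong solutions) and the smoothness hypotheses on $\Omega$, $f$, and $u_0$ are essential; once that is in hand, the approximation and the norm comparisons are routine.
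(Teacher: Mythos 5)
Your proposal is correct and follows essentially the same route as the paper's own (very brief) proof: invoke the classical regularity theory for the 2D Navier--Stokes equations and then apply the approximation Theorem \ref{theorappr} to produce $(\tilde u_N,\tilde p_N)$ close to $(u,p)$ in the norms of \eqref{condi1}. If anything, your version is more careful than the paper's, since you note explicitly that Theorem \ref{theorappr} requires a $C^k$ target on a compact set (hence the bootstrapping of the strong solution to $C^2\times C^1$ regularity up to the boundary and initial time, with the attendant compatibility conditions), a point the paper's proof glosses over when it passes directly from membership of $(u,p)$ in the spaces of \eqref{condi1} to the existence of the DNN approximant.
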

\begin{proof}
From Theorem \ref{theorappr}, as long as the solution $(u, p)$ of the NSEs belongs to the spaces in (\ref{condi1}), we can find $(\tilde{u}_N, \tilde{p}_N) \in \mathscr{F}_N$ as smooth as we want and close to $(u, p)$, which means (\ref{condi1}) holds. From the classical results of the 2-D NSEs, we know that we can find the solution $(u, p)$ that belongs to the spaces in (\ref{condi1}).
\end{proof}
	\textbf{Proof of Theorem \ref{thmay6}:}
	\begin{proof}
		From Lemma {\ref{lemforuN}}, 
		given $\varepsilon> 0$, assume that $u$ is a strong solution of (\ref{c2}) and there is an $N$ such that $\tilde{u}_N \in \mathscr{F}_N$ satisfying 
		$\displaystyle \sup_{t\in [0,T]}\|u-\tilde{u}_N\|_{L^2 (\Omega)}\leq \varepsilon$, $\displaystyle \|u-\tilde{u}_N\|_{H^{1, 2} (\Omega\times [0, T] )}\leq \varepsilon$, $\displaystyle \left(\int^T_0\|u-\tilde{u}_N\|^4_{W^{1,4}(\Omega)}dt\right)^{1/4}\leq \varepsilon$, $\displaystyle \|p-\tilde{p}_N\|_{L^2 ([0,T];H^1(\Omega))}\leq \varepsilon$. Now, we estimate the left hand side of (\ref{dec8}) term by term:
		
		
		Since $\displaystyle \sup_{t\in [0,T]}\|u-\tilde{u}_N\|_{L^2 (\Omega)}\leq \varepsilon$, we have
		\begin{align}
			\label{dec5}
			\|u(x,0)-\tilde{u}_N (x,0)\|^2_{L^2 (\Omega)}\leq \varepsilon^2.
		\end{align}
		
		Since $\nabla \cdot u=0$, we have
		\begin{align}
			\label{dec6}
			\| \nabla \cdot \tilde{u}_N \|_{L^4([0, T] ; L^2(\Omega))}^4&=\| \nabla \cdot \tilde{u}_N-\nabla \cdot u \|_{L^4([0, T] ; L^2(\Omega))}^4\\ \nonumber
			&\leq c\| \tilde{u}_N-u \|_{L^4([0, T] ; H^1(\Omega))}^4\\ \nonumber
			&\leq c\varepsilon^4.
		\end{align}
		
		Consider
		\begin{align*}
			\lambda \| \tilde{u}_N\|_{L^4([0, T]; H^1(\Omega))}^4&\leq c\lambda\| \tilde{u}_N-u\|_{L^4([0, T]; H^1(\Omega))}^4+c\lambda\|u\|_{L^4([0, T]; H^1(\Omega))}^4\\ \nonumber
			&\leq c \lambda \varepsilon^4+c\lambda F(G).
		\end{align*}
		
		
		Picking $\lambda$ small enough, we have
		\begin{align}
			\label{dec7}
			\lambda \| \tilde{u}_N\|_{L^4([0, T]; H^1(\Omega))}^4 \leq c\varepsilon^4.
		\end{align}
		
		Consider
		\begin{align}
			\label{dec3}
			&\|\partial_t \tilde{u}_N - \Delta \tilde{u}_N + \tilde{u}_N\cdot \nabla \tilde{u}_N + \nabla \tilde{p}_N - f\|_{L^2(\Omega\times [0, T] )}^2\\ \nonumber
			&=\|\partial_t \tilde{u}_N - \Delta \tilde{u}_N + \tilde{u}_N\cdot \nabla \tilde{u}_N + \nabla \tilde{p}_N -\left(\partial_t u - \Delta u + u \cdot \nabla u + \nabla p\right)\|_{L^2(\Omega\times [0, T] )}^2\\ \nonumber
			&=\|(\partial_t \tilde{u}_N - \partial_t u)+(\Delta u - \Delta \tilde{u}_N) + (\tilde{u}_N\cdot \nabla \tilde{u}_N-u \cdot \nabla u) + \left(\nabla \tilde{p}_N - \nabla p)\right)\|_{L^2(\Omega\times [0, T] )}^2\\ \nonumber
			&\leq c\|\partial_t \tilde{u}_N - \partial_t u\|_{L^2(\Omega\times [0, T] )}^2+c\| \Delta u - \Delta \tilde{u}_N\|_{L^2(\Omega\times [0, T] )}^2\\ \nonumber
			&+c\|\tilde{u}_N\cdot \nabla \tilde{u}_N-u \cdot \nabla u\|_{L^2(\Omega\times [0, T] )}^2+c\| \nabla \tilde{p}_N - \nabla p\|_{L^2(\Omega\times [0, T] )}^2.
		\end{align}
		
		We have $$\displaystyle \|\partial_t \tilde{u}_N - \partial_t u\|_{L^2(\Omega\times [0, T] )}^2\leq  \varepsilon^2,$$
		$$\displaystyle \| \Delta u - \Delta \tilde{u}_N\|_{L^2(\Omega\times [0, T] )}^2\leq \varepsilon^2,$$ and $$\displaystyle \| \nabla \tilde{p}_N - \nabla p\|_{L^2(\Omega\times [0, T] )}^2\leq \varepsilon^2.$$
		Moreover
		\begin{align}
			\label{dec4}
			&\| \tilde{u}_N\cdot \nabla \tilde{u}_N-u \cdot \nabla u\|_{L^2(\Omega\times [0, T] )}^2\\ \nonumber
			&=\| \tilde{u}_N\cdot \nabla \tilde{u}_N-u\cdot \nabla \tilde{u}_N+u\cdot \nabla \tilde{u}_N-u \cdot \nabla u\|_{L^2(\Omega\times [0, T] )}^2\\ \nonumber
			&=\| (\tilde{u}_N-u)\cdot \nabla \tilde{u}_N+u\cdot (\nabla \tilde{u}_N-\nabla u)\|_{L^2(\Omega\times [0, T] )}^2\\ \nonumber
			&\leq \|\tilde{u}_N-u\|^2_{L^4(\Omega\times [0, T] )} \| \nabla \tilde{u}_N\|^2_{L^4(\Omega\times [0, T] )}+\|u\|^2_{L^4(\Omega\times [0, T] )} \|\nabla \tilde{u}_N-\nabla u\|^2_{L^4(\Omega\times [0, T] )}.
		\end{align}
		We have
		$$\displaystyle \| \nabla \tilde{u}_N\|_{L^4(\Omega\times [0, T] )}\leq \| \nabla \tilde{u}_N-\nabla u\|_{L^4(\Omega\times [0, T] )}+\| \nabla  u\|_{L^4(\Omega\times [0, T] )}\leq c\varepsilon+cF(G)\leq cF(G).$$ Moreover, $$\|\tilde{u}_N-u\|_{L^4(\Omega\times [0, T] )}\leq c \varepsilon,\ \|u\|_{L^4(\Omega\times [0, T] )} \leq c F(G),\ \|\nabla \tilde{u}_N-\nabla u\|_{L^4(\Omega\times [0, T] )} \leq c \varepsilon.$$So
		$$\| \tilde{u}_N\cdot \nabla \tilde{u}_N-u \cdot \nabla u\|_{L^2(\Omega\times [0, T] )}^2\leq c \varepsilon^2.$$
		Therefore
		\begin{align}
			\label{dec21}
			\|\partial_t \tilde{u}_N - \Delta \tilde{u}_N + \tilde{u}_N\cdot \nabla \tilde{u}_N + \nabla \tilde{p}_N - f\|_{L^2(\Omega\times [0, T] )}^2 \leq O\left(\varepsilon^2\right).
		\end{align}
		Moreover
		\begin{align}
			\label{dec9}
			\| \tilde{u}_N|_{\partial \Omega}\|_{L^4([0, T]; H^{1/2}(\partial \Omega))}^4 &=\| \tilde{u}_N|_{\partial \Omega}-u|_{\partial \Omega}\|_{L^4([0, T]; H^{1/2}(\partial \Omega))}^4 \\ \nonumber
			&\leq c \| \tilde{u}_N-u\|_{L^4([0, T]; H^{1}(\Omega))}^4\\ \nonumber
			&\leq c \varepsilon^4.
		\end{align}
		
		In summary, combining (\ref{dec5})-(\ref{dec9}), we have
		\begin{align*}
			&	\| \tilde{u}_N|_{\partial \Omega}\|_{L^4([0, T]; H^{1/2}(\partial \Omega))}^4+\| \tilde{u}_N (x, 0) - u_0(x)\|_{L^2(\Omega)}^2 \\ \nonumber
			&	+\| \partial_t \tilde{u}_N - \Delta \tilde{u}_N + \tilde{u}_N\cdot \nabla \tilde{u}_N + \nabla \tilde{p}_N - f\|_{L^2(\Omega\times [0, T] )}^2 \\ \nonumber
			&	+ \| \nabla \cdot \tilde{u}_N \|_{L^4([0, T] ; L^2(\Omega))}^4
			+ \lambda \| \tilde{u}_N\|_{L^4([0, T]; H^1(\Omega))}^4\leq O\left(\varepsilon^2\right).
		\end{align*}
	\end{proof}
\textbf{Proof of Theorem \ref{thmay4}:}
\begin{proof}
	Considering
	\begin{align*}
	&\|\tilde u_{N_1}-\tilde u_{N_2}\|_{L^4([0, T]; L^2(\Omega))}\\
	&=\|\tilde u_{N_1}-u_1+u_2-\tilde u_{N_2}+u_1-u_2\|_{L^4([0, T]; L^2(\Omega))}\\ \nonumber
	&\leq \|\tilde u_{N_1}-u_1\|_{L^4([0, T]; L^2(\Omega))}+\|u_2-\tilde u_{N_2}\|_{L^4([0, T]; L^2(\Omega))}+\|u_1-u_2\|_{L^4([0, T]; L^2(\Omega))}.
	\end{align*}
	
	From (\ref{diff1}), $\displaystyle \|\tilde u_{N_1}-u_1\|_{L^4([0, T]; L^2(\Omega))}\leq O \left(\varepsilon^{1/2}+\frac{\varepsilon}{{\lambda}^{1/4}}\right)$ and $\displaystyle \|u_2-\tilde u_{N_2}\|_{L^4([0, T]; L^2(\Omega))}\leq O \left(\varepsilon^{1/2}+\frac{\varepsilon}{{\lambda}^{1/4}}\right)$, from the stability of solutions of NSE, we have $\|u_1-u_2\|_{L^4([0, T]; L^2(\Omega))}\leq O(\|u_{0, 1}-u_{0, 2}\|_{L^2(\Omega)}+\|f_1 -f_2\|_{L^4([0, T]; L^2(\Omega))})$.
	
	Therefore $$\|u_{N_1}-u_{N_2}\|_{L^4([0, T]; L^2(\Omega))}\leq O\left(\varepsilon^{1/2}+\frac{\varepsilon}{{\lambda}^{1/4}}+\|u_{0, 1}-u_{0, 2}\|_{L^2(\Omega)}+\|f_1 -f_2\|_{L^4([0, T]; L^2(\Omega))}\right).$$
	This implies that our scheme is approximately stable.
\end{proof}
	\section*{Acknowledgement}
	 J. Tian's work is supported in part by the AMS Simons Travel Grant.

\end{document}